\newcommand{\N}{\mathbb{N}}
\newcommand{\Q}{\mathbb{Q}}
\newcommand{\arc}{\mathcal{A}(S)}
\newcommand{\arcp}{\mathcal{A}(S')}
\theoremstyle{plain}
\newtheorem{theorem}{Theorem}[section]
\newtheorem{lemma}[theorem]{Lemma}
\newtheorem{prop}[theorem]{Proposition}
\newtheorem{corollary}[theorem]{Corollary}
\theoremstyle{remark}
\newtheorem*{remark}{Remark}
\newtheorem*{acknowledgements}{Acknowledgements}
\begin{document}

\title{Finite Rigid Sets in Arc Complexes}
\author{Emily Shinkle} 
\date{}

\maketitle             

\begin{abstract} 
For any compact, connected, orientable, finite-type surface with marked points other than the sphere with three marked points, we construct a finite rigid set of its arc complex: a finite simplicial subcomplex of its arc complex such that any locally injective map of this set into the arc complex of another surface with arc complex of the same or lower dimension is induced by a homeomorphism of the surfaces, unique up to isotopy in most cases. It follows that if the arc complexes of two surfaces are isomorphic, the surfaces are homeomorphic. We also give an exhaustion of the arc complex by finite rigid sets. This extends the results of Irmak--McCarthy \cite{irmak2010injective}. 
\end{abstract}

\section{Introduction}

Let $S$, $S'$ be compact, connected, orientable, finite-type surfaces (with empty boundary) with marked points. The \emph{arc complex} $\arc$ of $S$, first defined by Harer in \cite{harer1986virtual}, is the simplicial complex whose vertices correspond to the isotopy classes of arcs on $S$ and whose $k$-simplices ($k>0$) correspond to collections of $k+1$ distinct isotopy classes of arcs which are pairwise disjoint. Homeomorphisms of $S$ induce simplicial automorphisms of $\arc$. Conversely, in \cite{irmak2010injective}, Irmak--McCarthy prove that every simplicial automorphism (in fact, every injective simplicial self-map) of $\arc$ arises from a homeomorphism of $S$, unique up to isotopy in most cases; see also \cite{disarlo2015combinatorial}. In these cases $\text{Aut}(\arc)$ is isomorphic to $\text{Mod}^\pm(S)$, the extended mapping class group of $S$.
%The arc complex $\arc$ of $S$, which we define later, is a simplicial complex upon which $\text{Mod}^\pm(S)$ acts. 

Theorems of this type were first proved by Ivanov, who showed in \cite{ivanov1997automorphisms} that any automorphism of the curve complex of a surface with genus $g \geq 2$ is induced by a homeomorphism of the surface. Korkmaz \cite{korkmaz1999automorphisms} and Luo \cite{luo1999automorphisms} cover the cases where $g<2$. Further, Shackleton \cite{shackleton2007combinatorial} and Hernández-Hernández \cite{hernandez2018edge} give conditions such that maps between a priori different curve complexes are induced by homeomorphisms. Aramayona--Leininger construct finite rigid sets in the curve complex in \cite{aramayona2013finite} and provide an exhaustion of the curve complex by finite rigid sets in \cite{aramayona2016exhausting}. In this paper, we adapt the arguments of Irmak--McCarthy to prove analogues of Aramayona--Leininger's results for the arc complex. 

Recall that a simplicial map is \emph{locally injective} if the restriction to the star of every vertex is injective.

\begin{theorem}\label{main theorem one general}
Let $S=S_{g,n}$ be a compact, connected, orientable surface of genus $g$ with $n\geq 1$ marked points. If $S\neq S_{0,3}$, then there exists a finite simplicial subcomplex $\mathcal{X}$ of $\arc$ such that for any compact, connected, orientable, finite-type surface $S'$ with marked points and with $\text{dim}(\arc)\geq\text{dim}(\arcp)$ and for any locally injective simplicial map 
\[\lambda:\mathcal{X} \rightarrow \arcp,\]
there is a homeomorphism $H:S\rightarrow S'$ which induces $\lambda$. Moreover, if $S$ $\neq$ $S_{0,1}$, $S_{0,2}$, or $S_{1,1}$, $H$ is unique up to isotopy, and in these exceptional cases,
$H$ is unique up to $Z(\text{Mod}^\pm(S))$.
\end{theorem}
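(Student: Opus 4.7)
The plan is to adapt the Aramayona--Leininger strategy for finite rigid sets in the curve complex, using techniques from Irmak--McCarthy's injectivity theorem for the arc complex. First, I would build $\mathcal{X}$ by taking an ideal triangulation $T$ of $S$, which is a top-dimensional simplex of $\arc$, together with the arc obtained by flipping each edge of $T$ across its two adjacent triangles. If necessary, additional arcs are included near each marked point to encode its valence in $T$, so that the topological type of $S$ can be recovered from the labelled dual structure of $T$. The key design goal is that the disjointness relations among the arcs of $\mathcal{X}$ determine $T$ and all its flips uniquely among combinatorial triangulations of any surface of the same complexity.

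Given a locally injective simplicial map $\lambda:\mathcal{X}\to\arcp$, the first step is to observe that $\lambda$ sends the simplex spanned by $T$ to a simplex of the same dimension in $\arcp$; combined with the hypothesis $\dim(\arc)\geq\dim(\arcp)$, this forces $\lambda(T)$ to be a top-dimensional simplex, hence a triangulation of $S'$. In particular $\dim(\arc)=\dim(\arcp)$, so $S$ and $S'$ share the same arc-complex complexity. Next, for each edge $\alpha\in T$, the flipped arc $\alpha'$ is disjoint from every arc of $T\setminus\{\alpha\}$; by local injectivity $\lambda(\alpha')$ must satisfy the analogous relation with respect to $\lambda(T)$, pinning it down as the flip of $\lambda(\alpha)$ in $\lambda(T)$. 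Propagating this through all edges of $T$ and using the connectivity of its dual graph, the combinatorial type of $\lambda(T)$ as a triangulated surface is forced to agree with that of $T$.

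The main obstacle is precisely this combinatorial matching: one must argue that $\mathcal{X}$ contains enough arcs to force the marked-point identifications in $\lambda(T)$ to coincide with those in $T$, ruling out alternative triangulations that share the same local flip data but correspond to different topological surfaces, a subtlety that is especially delicate near marked points of low valence. Once this is established, a homeomorphism $H:S\to S'$ is produced by sending each triangle of $T$ to the corresponding triangle of $\lambda(T)$ via a canonical identification, and $H$ induces $\lambda$ by construction. For uniqueness, any other homeomorphism inducing $\lambda$ differs from $H$ by an element of $\emod$ that fixes every arc in $\mathcal{X}$; since $\mathcal{X}$ contains a triangulation together with its flips, standard arguments in the style of Irmak--McCarthy show that such an element is isotopic to the identity, except in the exceptional cases $S_{0,1}$, $S_{0,2}$, and $S_{1,1}$, where the obstructing element instead lies in $Z(\emod)$.
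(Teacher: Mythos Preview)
Your outline has the right large-scale shape, but the core step---forcing the combinatorial type of $\lambda(T)$ to agree with that of $T$---does not follow from the data you propose to include in $\mathcal{X}$.

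You are right that if $\alpha\in T$ is flippable and $\alpha'$ is its flip, then local injectivity forces $\lambda(\alpha')$ to be the flip of $\lambda(\alpha)$ in $\lambda(T)$. But knowing all flips of $T$ does \emph{not} determine which triples of arcs in $T$ bound triangles, nor the relative orientations of adjacent triangles. Concretely, suppose $\Delta_1$ has sides $a,b,c$ and $\Delta_2$ has sides $c,d,e$, sharing $c$. The flip $f$ of $c$ is disjoint from $a,b,d,e$ and from every other arc of $T$; hence $\lambda(f)$ is disjoint from $\lambda(a),\lambda(b),\lambda(d),\lambda(e)$ and is the flip of $\lambda(c)$. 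This is compatible with \emph{both} possible gluings of $\Delta_1'$ and $\Delta_2'$ along $c'$ (the orientation-preserving and the orientation-reversing one), since in either case the flip of $c'$ is the unique other diagonal of the resulting quadrilateral. Your sentence ``Propagating this through all edges of $T$ and using the connectivity of its dual graph, the combinatorial type of $\lambda(T)$ \ldots\ is forced to agree with that of $T$'' is exactly where the argument breaks: flip data does not see the orientation ambiguity, and the vague ``additional arcs near each marked point to encode its valence'' does not address it either.

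The paper resolves this by a much heavier construction. For each embedded triangle it includes an auxiliary triangulation (Proposition~3.2 in Irmak--McCarthy) together with three extra arcs $d,e,f$ whose pairwise disjointness, combined with six separate applications of an ``intersection number one is detectable'' lemma (each of which itself requires an auxiliary triangulation), forces $\lambda(a),\lambda(b),\lambda(c)$ to bound an embedded triangle. A further lemma of the same flavor handles non-embedded triangles, and yet another handles the orientation-matching across each shared edge by forcing $\lambda(a),\lambda(d),\lambda(f)$ to bound a triangle. Only after all of this is the homeomorphism $H$ assembled triangle-by-triangle. Finally, because $\mathcal{X}$ now contains many arcs beyond $T$, the paper must also include, for every such auxiliary arc, a chain of flip-related triangulations connecting it back to $T$, so that the argument $H_*|_{\mathcal{X}}=\lambda$ (not merely $H_*|_T=\lambda|_T$) goes through. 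Your proposal omits this entire layer of auxiliary arcs, and without it the orientation-matching step cannot be carried out.
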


We refer to any simplicial subcomplex of $\arc$ with this property as \emph{rigid}. Theorem \ref{main theorem one general} implies the following generalization of Irmak--McCarthy's result (cf \cite[Theorem 1.1]{irmak2010injective}).

\begin{corollary}\label{generalization of im}
Let $S$ and $S'$ be compact, connected, orientable, finite-type surfaces with marked points such that $\text{dim}(\arc)\geq\text{dim}(\arcp)$ and $S\neq S_{0,3}$. Then for any locally injective simplicial map $\phi:\arc \rightarrow \arcp$, there is a homeomorphism $H:S\rightarrow S'$ which induces $\phi$. Moreover, if $S$ $\neq$ $S_{0,1}$, $S_{0,2}$, or $S_{1,1}$, $H$ is unique up to isotopy, and in these exceptional cases,
$H$ is unique up to $Z(\text{Mod}^\pm(S))$.
\end{corollary}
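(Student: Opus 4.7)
The plan is to bootstrap Theorem~\ref{main theorem one general} to all of $\arc$ using an exhaustion of the arc complex by finite rigid subcomplexes. The paper's companion result, advertised in the abstract, supplies an exhaustion $\mathcal{X}_1\subseteq\mathcal{X}_2\subseteq\cdots$ with $\bigcup_n\mathcal{X}_n=\arc$; I would take this as given.

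First I would observe that each restriction $\phi|_{\mathcal{X}_n}$ is locally injective: the star of a vertex $v\in\mathcal{X}_n$ computed in $\mathcal{X}_n$ sits inside its star in $\arc$, on which $\phi$ is injective by hypothesis. Theorem~\ref{main theorem one general} then produces homeomorphisms $H_n:S\to S'$ inducing $\phi|_{\mathcal{X}_n}$. Both $H_n$ and $H_1$ induce $\phi|_{\mathcal{X}_1}$, so the uniqueness clause of Theorem~\ref{main theorem one general} applied to $\mathcal{X}_1$ forces $H_n$ to be isotopic to $H_1$ (and, in the exceptional cases $S=S_{0,1},S_{0,2},S_{1,1}$, to differ from $H_1$ only by an element of $Z(\emod)$, which acts trivially on $\arc$). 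Thus the induced simplicial map of $H_1$ agrees with $\phi$ on every $\mathcal{X}_n$, and since these subcomplexes exhaust $\arc$, $H_1$ itself induces $\phi$ on all of $\arc$. Uniqueness of $H$ up to the stated equivalence is inherited directly: any other homeomorphism $H'$ inducing $\phi$ in particular induces $\phi|_{\mathcal{X}_1}$, and Theorem~\ref{main theorem one general} applied to $\mathcal{X}_1$ pins $H'$ down to isotopy, or to $Z(\emod)$ in the exceptional cases.

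The substantive work here is carried entirely by Theorem~\ref{main theorem one general} together with the exhaustion result; the corollary is essentially a packaging argument. The one point that will need attention, and which I expect to be the only real subtlety, is the verification that $Z(\emod)$ acts trivially on $\arc$ in each of $S_{0,1},S_{0,2},S_{1,1}$. Without this the isotopy-class ambiguity supplied by Theorem~\ref{main theorem one general} could obstruct stitching the $H_n$ together; with it, the argument closes. Since these three surfaces are extremely low-complexity, the check should reduce to a finite verification in each case.
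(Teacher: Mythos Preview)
Your argument is correct, but it takes a different route from the paper's. The paper does not invoke the exhaustion result at all: it picks the single rigid set $\mathcal{X}$ from Theorem~\ref{main theorem one general}, obtains one homeomorphism $H$ inducing $\phi|_{\mathcal{X}}$, and then shows $H_*=\phi$ on every vertex of $\arc$ directly, by rerunning the flip argument from the proof of Theorem~\ref{main theorem one general} (Mosher's connectivity, Proposition~\ref{prop: connectedness}). In other words, the paper extends agreement from $\mathcal{X}$ outward by flips, rather than by producing a family of $H_n$'s and invoking uniqueness to identify them.

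Your approach packages the same ingredients differently: the flip argument is hidden inside the construction of the exhaustion (Theorem~\ref{main theorem two}), and uniqueness replaces the explicit extension step. This is logically sound---Theorem~\ref{main theorem two} does not depend on Corollary~\ref{generalization of im}---but it makes the corollary rest on the heavier Theorem~\ref{main theorem two} rather than just Theorem~\ref{main theorem one general}. The paper's proof is shorter and more self-contained for that reason.

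One remark on the point you flagged as the ``only real subtlety'': that $Z(\emod)$ acts trivially on $\arc$ for $S_{0,1},S_{0,2},S_{1,1}$ is not something you need to verify by hand. It is exactly the content of Irmak--McCarthy's result quoted as Theorem~\ref{irmak-mccarthy theorem 2}, which identifies $\autarc$ with $\emod/Z(\emod)$ in those cases; the center is precisely the kernel of the action. So your stitching step closes immediately from what is already cited.
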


We provide a counterexample to demonstrate why Theorem \ref{main theorem one general} and Corollary \ref{generalization of im} do not hold for $S=S_{0,3}$. However, we use a cardinality argument to show that the following corollary holds even if $S=S_{0,3}$. 

\begin{corollary}\label{corollary to main theorem one}
Let $S$ and $S'$ be compact, connected, orientable, finite-type surfaces with marked points. If $\arc$ and $\arcp$ are isomorphic, then $S$ and $S'$ are homeomorphic.
\end{corollary}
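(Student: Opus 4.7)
The plan is to split into two cases according to whether $S = S_{0,3}$. In the generic case, Corollary \ref{generalization of im} applies directly; the exceptional case is handled by a direct count of simplices.

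Suppose first that $S \neq S_{0,3}$, and let $\phi \colon \arc \to \arcp$ be a simplicial isomorphism. Then $\phi$ is automatically locally injective, since the restriction of a bijective simplicial map to the star of a vertex is a bijection onto the star of its image. Moreover, isomorphic simplicial complexes have the same dimension, so $\dim(\arc) = \dim(\arcp)$, and the hypotheses of Corollary \ref{generalization of im} are satisfied. That corollary then produces a homeomorphism $H \colon S \to S'$ inducing $\phi$, witnessing that $S$ and $S'$ are homeomorphic.

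It remains to treat $S = S_{0,3}$. I would first compute $\arc(S_{0,3})$ explicitly: between any two of the three marked points there is a unique isotopy class of essential arcs (since the complement of the third marked point is a disk, and arcs between two fixed points in a disk are isotopic), and the three resulting arcs can be realized pairwise disjointly. Hence $\arc(S_{0,3})$ is a single $2$-simplex, with exactly three vertices and seven simplices. If $\arcp$ is isomorphic to this complex, then $\arcp$ likewise has three vertices and is finite. The core remaining step is to check that no surface other than $S_{0,3}$ has an arc complex with exactly three vertices: $S_{0,1}$ admits no essential arcs, $S_{0,2}$ admits exactly one, and every other $S_{g,n}$ carries a nontrivial simple closed curve $\gamma$ along which one can iterate a Dehn twist on a fixed transverse essential arc to produce infinitely many distinct isotopy classes, yielding an infinite arc complex. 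Thus $S' = S_{0,3} = S$, and $S$, $S'$ are trivially homeomorphic.

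The main obstacle I anticipate is this cardinality lemma, namely ruling out every $S_{g,n} \notin \{S_{0,1},S_{0,2},S_{0,3}\}$ as a possible source of a three-vertex arc complex. The substance lies in the smallest cases $S_{0,4}$ and $S_{1,1}$, where one must exhibit concretely a simple closed curve together with a transverse essential arc; once these are in hand, the twisting construction handles all remaining surfaces uniformly, and the rest of the argument is routine book-keeping with the essentiality convention for arcs.
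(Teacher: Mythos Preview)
Your treatment of the generic case $S\neq S_{0,3}$ is correct and essentially matches the paper: you invoke Corollary~\ref{generalization of im}, while the paper invokes Theorem~\ref{main theorem one general} directly, but the content is the same.

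There is, however, a genuine error in your handling of $S=S_{0,3}$. Your computation of $\mathcal{A}(S_{0,3})$ is wrong: you have accounted only for arcs joining \emph{distinct} marked points and forgotten the arcs from a marked point to itself. For each marked point $p_i$ there is an essential arc based at $p_i$ that separates the other two marked points, and this arc is not isotopic into a neighborhood of any single marked point. Thus $\mathcal{A}(S_{0,3})$ has six vertices, not three; in fact (as noted in Section~\ref{section: exceptional cases}) it is the barycentric-style subdivision of a triangle into four $2$-simplices, not a single $2$-simplex. Consequently your ``exactly three vertices'' criterion does not isolate $S_{0,3}$, and the argument as written does not close.

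The fix is easy, and the paper's version is cleaner than a vertex count: since an isomorphism preserves dimension and $\dim\mathcal{A}(S_{g,n})=6g+3n-7$, the equation $6g'+3n'-7=2$ forces $S'\in\{S_{0,3},S_{1,1}\}$. Then $\mathcal{A}(S_{0,3})$ is finite while $\mathcal{A}(S_{1,1})$ (the Farey complex) is infinite, so $S'=S_{0,3}$. Your Dehn-twist idea would also recover this infiniteness statement for $S_{1,1}$, but the dimension step spares you from having to rule out every other $(g,n)$ by hand.
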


Corollary \ref{corollary to main theorem one} was also shown by Bell--Disarlo--Tang in \cite{bell2019cubical} using a result about the
flip graph. Our proof relies only upon results about the arc complex. 

Finally, we give an exhaustion of $\arc$ by finite rigid sets.

\begin{theorem} \label{main theorem two}
For any compact, connected, orientable, finite-type surface $S$ with marked points such that $S\neq S_{0,3}$ there exists a sequence $(\mathcal{X}_i)_{i\in \N}$ of finite rigid sets of $\arc$ such that $\mathcal{X}_0\subseteq \mathcal{X}_1 \subseteq ... \subseteq \arc$ and $\bigcup_{i\in \N} \mathcal{X}_i = \arc$. 
\end{theorem}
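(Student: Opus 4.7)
The plan is to follow the template of Aramayona--Leininger \cite{aramayona2016exhausting}, building the sequence $(\mathcal{X}_i)_{i \in \N}$ by starting from the finite rigid set $\mathcal{X}$ provided by Theorem \ref{main theorem one general} and iteratively adjoining translates of $\mathcal{X}$ under the extended mapping class group action, while preserving rigidity at every stage.

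The central ingredient is a \emph{union lemma}: if $\mathcal{Y}_1, \mathcal{Y}_2 \subseteq \arc$ are finite rigid subcomplexes whose intersection $\mathcal{Y}_1 \cap \mathcal{Y}_2$ itself contains a rigid subcomplex, then $\mathcal{Y}_1 \cup \mathcal{Y}_2$ is rigid. To prove this, given a locally injective simplicial map $\lambda : \mathcal{Y}_1 \cup \mathcal{Y}_2 \to \arcp$ meeting the dimension bound, each restriction $\lambda|_{\mathcal{Y}_i}$ is induced by a homeomorphism $H_i : S \to S'$; applying the uniqueness clause of Theorem \ref{main theorem one general} to a rigid subcomplex inside the intersection forces $H_1$ and $H_2$ to be isotopic (or, in the exceptional cases $S \in \{S_{0,1}, S_{0,2}, S_{1,1}\}$, to differ by an element of $Z(\emod)$, which acts trivially on $\arc$). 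Thus $H_1$ and $H_2$ induce the same simplicial map on $\mathcal{Y}_1 \cup \mathcal{Y}_2$, and this map is $\lambda$.

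With the union lemma in hand, I enumerate the countably many arcs of $\arc$ as $a_0, a_1, \ldots$ and proceed by induction, taking each $\mathcal{X}_i$ to be the full subcomplex on some finite vertex set. I set $\mathcal{X}_0 = \mathcal{X}$, after enlarging it once so that it contains a representative from every $\emod$-orbit of arcs; there are finitely many such orbits, and the enlargement remains rigid by the union lemma. At step $i$, I select $\phi_i \in \emod$ with $a_i \in \phi_i(\mathcal{X})$ and connect $\phi_i$ to the identity (or to a previously used mapping class) by a word $\sigma_1 \cdots \sigma_k$ in a Dehn-twist/half-twist generating set. Then I build $\mathcal{X}_i$ by successively unioning in the intermediate translates $(\sigma_1 \cdots \sigma_j)(\mathcal{X})$, applying the union lemma at each stage. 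Each $\mathcal{X}_i$ is finite and rigid, contains $a_i$, and $\bigcup_i \mathcal{X}_i = \arc$.

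The principal obstacle is ensuring that at each link of the chain, $(\sigma_1 \cdots \sigma_{j-1})(\mathcal{X}) \cap (\sigma_1 \cdots \sigma_j)(\mathcal{X})$ contains a rigid subcomplex; equivalently, that for every generator $\sigma$ in the chosen set, $\mathcal{X} \cap \sigma(\mathcal{X})$ is itself rigid. To arrange this, I expect to replace the $\mathcal{X}$ of Theorem \ref{main theorem one general} by a larger finite rigid set engineered so that its intersection with any single generator's translate still supports the rigidity argument --- for instance, by thickening around a triangulation of $S$ with enough alternative arcs in each complementary region that a single Dehn twist or half-twist removes only arcs whose absence does not destroy rigidity of the intersection. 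This is a one-time combinatorial adjustment whose verification parallels the proof of Theorem \ref{main theorem one general}, and once it is in place the induction runs cleanly.
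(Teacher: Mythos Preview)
Your approach differs substantially from the paper's, and the step you flag as the ``principal obstacle'' is a genuine gap. You write that ensuring $\mathcal{X} \cap \sigma(\mathcal{X})$ contains a rigid subcomplex for every generator $\sigma$ is ``a one-time combinatorial adjustment whose verification parallels the proof of Theorem~\ref{main theorem one general},'' but you do not carry this out, and it is not clear it can be done cheaply. In the curve-complex setting of Aramayona--Leininger this adjustment is the substantive content of the argument and requires real case-by-case work; here you would at minimum need to exhibit a specific enlarged $\mathcal{X}$ and verify, for each Dehn twist and half-twist in a chosen generating set, that the intersection with its translate still supports the full rigidity machinery (triangulations, the auxiliary arcs of Lemmas~\ref{detectible}--\ref{one side prop}, and flip paths). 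Without this verification the induction does not get off the ground, and the proposal as written defers exactly the part that would distinguish a proof from a plan.

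The paper sidesteps this difficulty entirely by exploiting a feature of the arc complex that the curve complex lacks: flip-connectedness of triangulations (Proposition~\ref{prop: connectedness}). The rigid set $\mathcal{X}_0$ from Theorem~\ref{main theorem one general} already contains a fixed triangulation $T$. Given an enumeration $x_1, x_2, \ldots$ of the arcs, each $x_i$ lies in some triangulation $T^i$, and Mosher's connectivity theorem supplies a finite flip sequence $T = T_0^i, T_1^i, \ldots, T_{m_i}^i = T^i$. One then sets $\mathcal{X}_i = \text{Span}_{\arc}\bigl(\mathcal{X}_{i-1} \cup \bigcup_j T_j^i\bigr)$. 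Rigidity of each $\mathcal{X}_i$ follows by the \emph{same} flip-induction already used inside the proof of Theorem~\ref{main theorem one general}: the homeomorphism $H$ obtained from $\mathcal{X}_0$ is shown to agree with $\lambda$ on each successive triangulation in every flip sequence, using only that a codimension-one simplex has at most two completions to a maximal simplex. No union lemma, no generator-by-generator intersection analysis, and no further enlargement of $\mathcal{X}$ is needed.
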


\begin{remark}
Irmak--McCarthy prove their results for surfaces with $n$ boundary components rather than $n$ marked points; however, they consider arcs up to isotopy not necessarily fixing the endpoints of the arcs. This implies that Dehn twists around boundary components act trivially on $\arc$. Hence we can think of these boundary components as marked points instead. In \cite{disarlo2015combinatorial}, Disarlo considers surfaces with nonempty boundary, with at least one marked point on each boundary component and with a finite number of punctures in the interior. In contrast to Irmak--McCarthy, she considers arcs up to isotopy fixing the endpoints; hence, in this setting Dehn twists around boundary components act nontrivially on $\arc$. Disarlo proves that in this context, isomorphisms between arc complexes are induced by homeomorphisms of the associated surfaces. 
\end{remark}

Results of this type are often referred to as \emph{Ivanov-style rigidity} or simply \emph{rigidity}. Since Ivanov's seminal work, rigidity results have been proved for many other complexes, including the pants complex (Margalit \cite{margalit2004automorphisms}, Aramayona \cite{aramayona2010simplicial}), the arc and curve complex (Korkmaz--Papadopoulos \cite{korkmaz2010arc}), the ideal triangluation graph or flip graph (Korkmaz--Papadopoulos \cite{korkmaz2012ideal}, Aramayona--Koberda--Parlier \cite{aramayona2015injective}), the Schumtz graph of nonseparating curves (Schaller \cite{schaller2000mapping}), the complex of nonsepating curves (Irmak \cite{irmak2004complexes}), the Hatcher-Thurston complex (Irmak--Korkmaz \cite{irmak2007automorphisms}), and the polygonalisation complex (Bell--Disarlo--Tang \cite{bell2019cubical}), among others. See \cite{mccarthy2012simplicial} by McCarthy and Papadopoulus for a survey. Rigidity results exist for complexes of non-orientable surfaces as well, such as the curve complex (Atalan--Korkmaz \cite{atalan2014automorphisms}, Irmak \cite{irmak2014simplicial}, Irmak \cite{irmak2012superinjective}), the arc complex (Irmak \cite{irmak2008injective}), and the two-sided curve complex (Irmak--Paris \cite{irmak2017superinjective}). In \cite{ivanov2006fifteen}, Ivanov conjectured that every object naturally associated to a surface with sufficiently rich structure has the extended mapping class group as its group of automorphisms. Brendle--Margalit \cite{brendle2019normal} prove general rigidity results about subcomplexes of the complex of domains (introduced by McCarthy--Papadopoulos in \cite{mccarthy2012simplicial}) for closed surfaces and McLeay \cite{mcleay2018geometric} proves general rigidity results for subcomplexes of the complex of domains of punctured surfaces. These imply rigidity for a large class of complexes towards Ivanov's conjecture. 

Finite rigidity results are less plentiful but some exist. Maungchang \cite{maungchang2018finite} proves finite rigidity for the pants graph of a punctured sphere and Hernández-Hernández--Leininger--Maungchang extend the result for any surface $S_{g,n}$ in \cite{maungchang2019finite}. Maungchang also gives an exhaustion of the pants graph of punctured spheres by finite rigid sets in \cite{maungchang2017exhausting}. For non-orientable surfaces, Ilbira--Korkmaz construct finite rigid sets of the curve complex in \cite{ilbira2018finite} and Irmak gives an exhaustion of the curve complex by finite rigid sets in \cite{irmak2019exhausting2}, strengthening her exhaustion by finite superrigid sets in \cite{irmak2019exhausting}. 

\begin{acknowledgements}
The author would like to thank Christopher Leininger for suggesting this project and for providing guidance and support throughout it. She would also like to thank Marissa Miller and Dan Margalit for their helpful comments on a draft of the paper. Finally, she would like to thank the referee for their thoughtful feedback and suggestions. 
\end{acknowledgements}

\section{The Arc Complex}

Let $S=S_{g,n}$ denote a compact, connected, orientable surface (with empty boundary) of genus $g$ with $n\geq 1$ marked points. Let $\mathscr{P}_S$ be the set of marked points of $S$. Homeomorphisms between surfaces will be assumed to map marked points to marked points and isotopies between these homeomorphisms will be relative to the marked points. The \emph{extended mapping class group} $\text{Mod}^\pm(S)$ of $S$ is the group of isotopy classes of homeomorphisms of $S$.

An \emph{arc} on $S$ is a map $\gamma:[0,1]\rightarrow S$ such that $\gamma(0), \gamma(1)\in \mathscr{P}_S$, $\gamma((0,1))\cap \mathscr{P}_S=\emptyset$, and $\gamma|_{(0,1)}$ is injective. We will identify an arc $\gamma$ with its image $\gamma([0,1])$ on $S$ and we call $\gamma((0,1))$ the \emph{interior} of the arc. We call two arcs isotopic only if there exists an isotopy between them such that each of the transition maps is also an arc. In particular, isotopies of arcs will be relative to the endpoints and are not permitted to pass through marked points. We will assume that all arcs are \emph{essential}, ie cannot be isotoped into a regular neighborhood of a marked point.

If $a$ and $b$ are isotopy classes of arcs on $S$, then the \emph{geometric intersection number} $i(a,b)$, or \emph{intersection number}, is the minimum number of intersection points of the interiors of representatives of $a$ and $b$. The \emph{arc complex} $\arc$ is the simplicial complex whose vertices correspond to the isotopy classes of arcs on $S$ and whose $k$-simplices ($k>0$) correspond to collections of $k+1$ distinct isotopy classes of arcs which have pairwise disjoint representatives. Unless necessary, we will not distinguish between an isotopy class of arcs, a representative of the class, and the corresponding vertex of the arc complex. We say two arcs are \emph{disjoint} if $i(a,b)=0$. By \emph{distinct arcs}, we mean distinct isotopy classes of arcs. 

We state explicitly the results of Irmak--McCarthy as we will reference them later. Recall that $Z(G)$ refers to the center of the group $G$.

\begin{theorem} \label{irmak-mccarthy theorem 1} (\cite{irmak2010injective}, Theorem 1.1). Let $S$ be a compact, connected, orientable, finite-type surface with marked points. If $\varphi: \arc \rightarrow \arc$ is an injective simplicial map then $\varphi$ is induced by a homeomorphism $H:S\rightarrow S$. 
\end{theorem}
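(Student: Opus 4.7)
The plan is to follow the standard Ivanov-style template, specialized to the arc complex. Since $\varphi$ is injective and simplicial, it preserves dimensions of simplices. The top-dimensional simplices of $\arc$ correspond to (ideal) triangulations of $S$ with vertices at $\mathscr{P}_S$, so $\varphi$ carries triangulations to triangulations. Moreover, two triangulations $T, T'$ are related by a flip precisely when their corresponding top-simplices share a codimension-one face of $\arc$; since $\varphi$ preserves this combinatorial relation, it descends to a self-map of the flip graph of $S$.

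Next I would develop a combinatorial recognition of arc types. For an arc $\alpha$ sitting inside a triangulation $T$, the local picture of the two (possibly degenerate) triangles meeting $\alpha$ governs whether $\alpha$ is ``flippable'' inside $T$, and this is encoded in the link of $T \setminus \{\alpha\}$ in $\arc$. A careful analysis of stars and links of subsimplices should let one distinguish, using only the simplicial data preserved by $\varphi$, between separating and nonseparating arcs, arcs cutting off a single marked point, and other topological types. Once these invariants are in place, one obtains that $\varphi$ preserves the topological type of each arc and of each multicurve complement.

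With arc types recognized, I would fix a reference triangulation $T_0$ of $S$ and compare it to $\varphi(T_0)$. Because $\emod$ acts transitively on triangulations of a given combinatorial (hence topological) type, I may pre-compose $\varphi$ with an element of $\emod$ so that the resulting map sends $T_0$ to itself, and by a further symmetry argument so that it fixes each arc of $T_0$. Using the connectivity of the flip graph together with the preservation of flips, I would then show by induction on the flip distance to $T_0$ that the adjusted map fixes every arc of $\arc$. Unwinding the composition produces the desired homeomorphism $H$ inducing $\varphi$.

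The main obstacle I anticipate is the combinatorial recognition step in low-complexity cases such as $S_{0,4}$, $S_{1,1}$, and $S_{1,2}$: here triangulations are small, automorphisms of links are large, and several would-be distinguishing configurations degenerate or coincide. These cases will need to be handled as base cases, carefully verifying by hand that injectivity of $\varphi$ still forces the required structure, and then used as anchors for an induction on the complexity $\dim(\arc) = 6g - 6 + 3n$.
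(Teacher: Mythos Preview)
This theorem is not proved in the paper at all: it is quoted verbatim as Theorem~1.1 of Irmak--McCarthy \cite{irmak2010injective} and used as a black box (see the sentence ``We state explicitly the results of Irmak--McCarthy as we will reference them later''). So there is no ``paper's own proof'' of this statement to compare your proposal against.

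What the paper \emph{does} prove is the stronger Corollary~\ref{generalization of im} (for $S\neq S_{0,3}$), and its method is rather different from your sketch. You propose to recognize arc types combinatorially from links, normalize $\varphi(T_0)$ back to $T_0$ via an element of $\emod$, and then propagate using flip-connectivity. The paper instead never attempts to classify arc types directly; it builds, for a fixed triangulation $T$, a finite set $\mathcal{F}\supset T$ of auxiliary arcs (Lemmas~\ref{detectible}--\ref{one side prop}) so that any locally injective simplicial map defined on $\mathcal{F}$ is forced to send the triangles of $T$ to triangles of the same type and with compatible orientations, yielding a homeomorphism $H$ (Proposition~\ref{extension}); only then is flip-connectivity invoked to show $H_*$ agrees with the given map everywhere. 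Your approach is closer in spirit to the original Irmak--McCarthy argument, while the paper's point is precisely that the rigidity is already visible on a finite subcomplex.

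As a proof sketch of the Irmak--McCarthy theorem itself, your outline is plausible but the ``combinatorial recognition'' step is where all the work lies, and your description of it is too vague to assess: you would need concrete link/star criteria distinguishing embedded from non-embedded triangles and controlling relative orientations, which is exactly what Lemmas~\ref{embedded}--\ref{one side prop} supply (in the paper's localized form). Your anticipated low-complexity obstacles are real, but note that $S_{1,1}$ and $S_{0,3}$ genuinely behave differently and are handled separately in both this paper and in \cite{irmak2010injective}.
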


\begin{theorem} \label{irmak-mccarthy theorem 2} (\cite{irmak2010injective}, Theorem 1.2).
Let $S$ be a compact, connected, orientable, finite-type surface with marked points. If $S$ is not $S_{0,1}$, $S_{0,2}$, $S_{0,3}$, or $S_{1,1}$, then $\text{Aut}(\arc)$ is naturally isomorphic to $\text{Mod}^\pm(S)$. For each of the special cases, $\text{Aut}(\arc)$ is naturally isomoprhic to $\text{Mod}^\pm(S)/Z(\text{Mod}^\pm(S))$.
\end{theorem}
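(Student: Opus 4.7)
The plan is to use Theorem \ref{irmak-mccarthy theorem 1} to define a natural homomorphism $\Phi:\emod \to \autarc$ sending a mapping class $[H]$ to the simplicial automorphism of $\arc$ induced by $H$. Since every automorphism of $\arc$ is in particular an injective simplicial self-map, Theorem \ref{irmak-mccarthy theorem 1} provides a preimage under $\Phi$, so $\Phi$ is surjective. The content of the theorem then reduces to identifying $\ker\Phi$: it should be trivial outside of the four exceptional cases, and equal to $Z(\emod)$ in those cases.

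In the generic case, the approach is an Alexander-method argument for arcs. Fix an ideal triangulation $T$ of $S$. A class $[H] \in \ker \Phi$ fixes the isotopy class of every vertex of $\arc$, so in particular $H$ can be isotoped to preserve $T$ setwise and fix each edge of $T$ as an unoriented arc. One then checks that this forces $H$ to permute the complementary triangles of $T$ in a very restricted way, and that augmenting $T$ by one or two additional arcs (chosen to break any remaining edge-reversal or triangle-swap symmetry) suffices to rule out every nontrivial possibility, so that $H$ must be isotopic to the identity. The main technical step is producing such an augmenting arc system whose joint stabilizer in $\emod$ is trivial; the four exceptional surfaces are precisely the ones where this fails.

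For the four exceptional cases one computes $\arc$ and $\autarc$ directly. On $S_{0,1}$ there are no essential arcs and $\autarc$ is trivial; on $S_{0,2}$ there is a unique isotopy class of arc and $\autarc$ is again trivial; on $S_{0,3}$ and $S_{1,1}$ one enumerates the relevant arcs and checks that the hyperelliptic involution, which generates $Z(\emod)$ in each case, acts as the identity on every arc class, while any noncentral class acts nontrivially on some arc. The first isomorphism theorem then yields $\autarc \cong \emod / Z(\emod)$. The main obstacle throughout is the generic Alexander-method step, which requires a careful choice of arc system and case-by-case verification for surfaces of low complexity just above the exceptional range; the exceptional-case verification is more routine but relies on an explicit description of $\emod$ and a small finite enumeration of arcs.
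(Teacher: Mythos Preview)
This theorem is not proved in the present paper: it is quoted verbatim from Irmak--McCarthy \cite{irmak2010injective} as background (Theorem~1.2 there), and the paper uses it only as a black box in the exceptional cases of Section~\ref{section: exceptional cases}. So there is no ``paper's own proof'' to compare your proposal against.

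That said, your sketch is a reasonable outline of how the original Irmak--McCarthy argument goes. A couple of minor points: first, surjectivity of $\Phi$ really does follow immediately from Theorem~\ref{irmak-mccarthy theorem 1}, so the substance is entirely in computing $\ker\Phi$, as you say. Second, your description of the exceptional cases is slightly loose---on $S_{0,3}$ the relevant central element is not a ``hyperelliptic involution'' in the usual sense but rather an orientation-reversing involution fixing all three marked points, and on $S_{0,2}$ one should check that $\emod$ is abelian so that the center is the whole group. These are easy fixes. The Alexander-method step in the generic case is the real work, and you correctly identify that the delicate part is producing an arc system (a triangulation plus a few more arcs) whose stabilizer in $\emod$ is trivial; this is indeed how Irmak--McCarthy proceed, and the low-complexity surfaces just above the exceptional range do require some care.
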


A key object in the proofs of these results and our results is triangulations of surfaces. We dedicate the remainder of this section to describing necessary background information regarding triangulations. If $S$ is neither $S_{0,1}$ nor $S_{0,2}$, it has at least three distinct, disjoint arcs, and we call a maximal collection of distinct pairwise disjoint arcs on $S$ a \emph{triangulation} of $S$. There is a natural $\Delta$-complex associated to a triangulation with the arcs as its 1-skeleton, hence the name ``triangulation'' (see eg Hatcher \cite[Section 2.1]{hatcher2001algebraic} for more on $\Delta$-complexes). We will refer to the 2-cells as \emph{triangles} of $T$. We say that an arc $a$ is a \emph{side} of a triangle $\Delta$ in $T$ if $a$ is contained in $\partial\Delta$. We say a triangle is \emph{embedded} if its sides are distinct arcs on $S$ and that it is \emph{non-embedded} otherwise. Note that an embedded triangle is not required to have distinct vertices. All non-embedded triangles are of the form pictured in Figure \ref{example of non-embedded triangle}. Call the side of a non-embedded triangle which joins two distinct punctures the \emph{inner arc} (eg arc $a$ in Figure \ref{example of non-embedded triangle}). Call the other arc the \emph{outer arc} (eg arc $b$ in Figure \ref{example of non-embedded triangle}). 

\begin{figure}[h]
    \labellist
        \pinlabel {$a$} at 355 325
        \pinlabel {$b$} at 535 145
    \endlabellist
    \centering
    \includegraphics[width=1in]{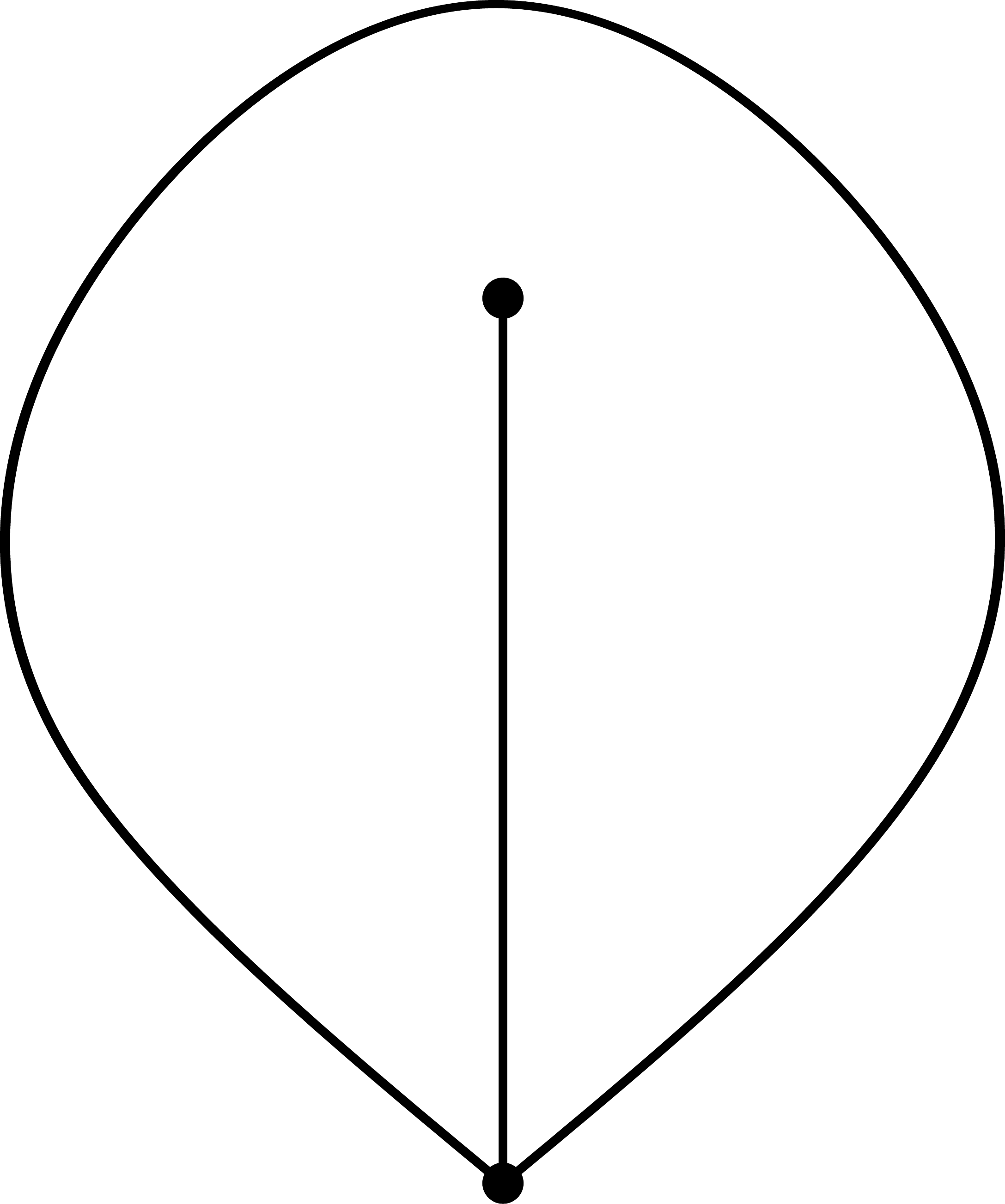}
    \caption{A non-embedded triangle}
    \label{example of non-embedded triangle}
\end{figure}

We can use the Euler characteristic to see that the number of arcs in a triangulation of $S$ is $6g+3n-6$, hence $\text{dim}(\arc)=6g+3n-7$. If $\text{dim}(\arc)=\text{dim}(\arcp)$, a locally injective simplicial map from a subcomplex $\mathcal{Y}$ of $\arc$ into $\arcp$ sends any triangulation $T$ contained in $\mathcal{Y}$ to a triangulation $T'$ of $S'$. The number of triangles in a triangulation is $4g+2n-4$. Hence, if $\text{dim}(\arc)=\text{dim}(\arcp)$, then a triangulation of $S$ has the same number of triangles as a triangulation of $S'$. 

We say that two triangulations are obtained from each other by a \emph{flip} if they differ by exactly one arc. In this case, the distinct arcs have intersection number one. Conversely, if two distinct arcs $a$ and $b$ have intersection number one, there exist triangulations $T_a$ and $T_b$ containing $a$ and $b$, respectively, such that $T_a\backslash \{a\}=T_b\backslash \{b\}$.

We will need the following result of Mosher regarding triangulations later.

\begin{prop} \label{prop: connectedness} (\cite{mosher1988tiling}, ``Connectivity Theorem for Elementary Moves''). Let $S$ be a compact, connected, orientable, finite-type surface with marked points. Any two triangulations of $S$ differ by a finite number of flips. 
\end{prop}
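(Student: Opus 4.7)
The plan is to prove this by induction on a complexity measure that records how badly two triangulations fail to coincide. Specifically, after fixing representatives of all arcs in $T \cup T'$ that are simultaneously in minimal position (which is possible since the surface admits a hyperbolic structure and one can replace all arcs by their unique geodesic representatives, or alternatively by an iterated bigon-removal argument), define
\[
c(T, T') \;=\; \sum_{a \in T, \, b \in T'} i(a, b).
\]
The base case is $c(T, T') = 0$: then every arc of $T$ is disjoint from every arc of $T'$, so $T \cup T'$ is a disjoint arc system, and by maximality of $T$ (as well as the fact that both sets have cardinality $6g + 3n - 6$), we conclude $T = T'$, so no flips are needed.

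For the inductive step, assume $c(T, T') > 0$; I will exhibit a single flip $T \leadsto T^*$ that strictly decreases the complexity, so that induction applied to $(T^*, T')$ finishes the argument. Pick any arc $b \in T' \setminus T$; then $b$ must cross some arc of $T$, for otherwise $T \cup \{b\}$ would be a disjoint arc system strictly larger than the triangulation $T$. Now follow $b$ along its length and examine the sequence of triangles of $T$ that its complementary segments lie in. Because $b$ is in minimal position with $T$, no segment of $b$ forms a bigon with any arc of $T$, and in particular no segment of $b$ in a single triangle enters and exits through the same side. Using this, I will locate an arc $a \in T$ intersecting $b$ together with two triangles $\Delta_1, \Delta_2$ of $T$ meeting along $a$, such that $b$ passes through the quadrilateral $Q = \Delta_1 \cup \Delta_2$ via a segment $\sigma$ which enters $\Delta_1$ through a side distinct from $a$, crosses $a$ exactly once, and exits $\Delta_2$ through a side distinct from $a$.

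Once such a configuration is found, flip $a$ to the opposite diagonal $a^*$ of $Q$, producing $T^* = (T \setminus \{a\}) \cup \{a^*\}$. The segment $\sigma$ can now be isotoped in $Q$ to miss $a^*$ entirely, which yields $i(a^*, b) \le i(a, b) - 1$. A short case analysis, comparing how each arc of $T'$ meets the two triangles of $Q$, shows that no intersection number with any other arc of $T'$ strictly increases when $a$ is replaced by $a^*$; thus $c(T^*, T') < c(T, T')$, completing the induction.

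The main obstacle is the geometric lemma in the previous paragraph: producing the arc $a$ and segment $\sigma$ with the required crossing behavior inside a quadrilateral of $T$, and doing so uniformly in all cases. Degenerate configurations are the delicate point, since $\Delta_1$ and $\Delta_2$ need not be embedded and may share more than one edge; here $Q$ can be a once-punctured bigon or a self-glued quadrilateral, and one needs to verify that the flip $a \mapsto a^*$ still makes sense and still decreases complexity. I expect this to be handled by first passing to an ``innermost'' segment of $b$ (one of minimal length among those crossing $T$), using the absence of bigons to rule out the bad entry/exit patterns, and then checking the small list of topological types of $Q$ directly; this is essentially the content of Mosher's original argument in \cite{mosher1988tiling}.
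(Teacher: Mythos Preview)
The paper does not give its own proof of this proposition; it is quoted from Mosher's thesis and used as a black box in the construction of the rigid set $\mathcal{X}$. So there is nothing in the paper to compare your argument against.

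On the merits of your sketch: the global complexity $c(T,T')=\sum_{a\in T,\,b'\in T'} i(a,b')$ is a natural thing to try, but the step ``a short case analysis \ldots shows that no intersection number with any other arc of $T'$ strictly increases when $a$ is replaced by $a^*$'' is not correct. In the quadrilateral $Q=\Delta_1\cup\Delta_2$, label the four corners so that $a$ joins two of them and $a^*$ the other two. A chord of $Q$ that cuts off a corner at an endpoint of $a^*$ meets $a^*$ once and misses $a$ entirely; any arc $b'\in T'$ having such a chord among its segments in $Q$ therefore satisfies $i(a^*,b')>i(a,b')$. Nothing in your setup excludes this, and the same mechanism can even make $i(a^*,b)\ge i(a,b)$ for your chosen $b$, since later passages of $b$ through $Q$ may cut off the far corner at the other endpoint of $a^*$. (Your ``innermost segment'' idea controls only one passage of $b$ through $Q$, not all of them.) Thus $c(T,T')$ need not drop under the flip you describe, and the induction as written does not close.

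The standard arguments avoid this by not tracking all of $T'$ at once. One route is a nested induction: outer on $|T\setminus T'|$, inner on $i(T,b)$ for one fixed $b\in T'\setminus T$, so the other arcs of $T'$ never enter the bookkeeping. Even then, showing that \emph{some} flip lowers $i(T,b)$ takes more than locating a single favorable segment; the clean way is to lift to the universal cover with the hyperbolic metric, take $a$ to be the edge of $T$ carrying the first crossing of $b$, and use that the lifted $a^*$ through the initial quadrilateral shares an ideal endpoint with the lift of $b$, hence is disjoint from it. One still has to check that crossings with \emph{other} lifts of $a^*$ do not overcompensate, and this is where the actual work in Mosher's and Hatcher's proofs lies; it is not the ``short case analysis'' you describe.
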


\section{Exceptional Cases}\label{section: exceptional cases}

In this section, we dispense with the cases where $\arc$ is empty or has dimension $\leq 2$, ie if $S$ is $S_{0,1}$, $S_{0,2}$, $S_{0,3}$, or $S_{1,1}$. On $S_{0,1}$, there are no essential arcs, hence $\mathcal{A}(S_{0,1})=\emptyset$. On $S_{0,2}$, there is only one essential arc, hence $\mathcal{A}(S_{0,2})$ is a single point and has dimension 0. All other surfaces have arc complex of dimension $>0$, hence for $S_{0,1}$ and $S_{0,2}$, Theorem \ref{main theorem one general} follows from setting $\mathcal{X}=\arc$ and applying the results of Irmak--McCarthy (Theorems \ref{irmak-mccarthy theorem 1} and \ref{irmak-mccarthy theorem 2}).

Now suppose $\text{dim}(\arc)=2$. Then $S$ is $S_{0,3}$ or $S_{1,1}$. It is well-known that $\mathcal{A}(S_{0,3})$ is isomorphic to a regular tessellation of a triangle by four triangles (see eg. \cite{irmak2010injective}, \cite{mccarthy2012simplicial}) and that $\mathcal{A}(S_{1,1})$ is isomorphic to the flag complex of the \emph{Farey graph}, a decomposition by ideal triangles of $\mathbb{H}^2 \cup \Q \mathbb{P}^1$ (see eg. \cite{irmak2010injective}). Figure \ref{complexity three counter} shows a simplicial embedding of $\mathcal{A}(S_{0,3})$ into $\mathcal{A}(S_{1,1})$ but $S_{0,3}$ and $S_{1,1}$ are not homeomorphic. Hence, Theorem \ref{main theorem one general} and Corollary \ref{generalization of im} cannot be extended for $S_{0,3}$. However, we can find a finite rigid set in $\mathcal{A}(S_{1,1})$ and we prove later that Corollary \ref{generalization of im} holds for $S_{1,1}$ as well. 

\begin{figure}[h]
    \centering
    \includegraphics[scale=.11]{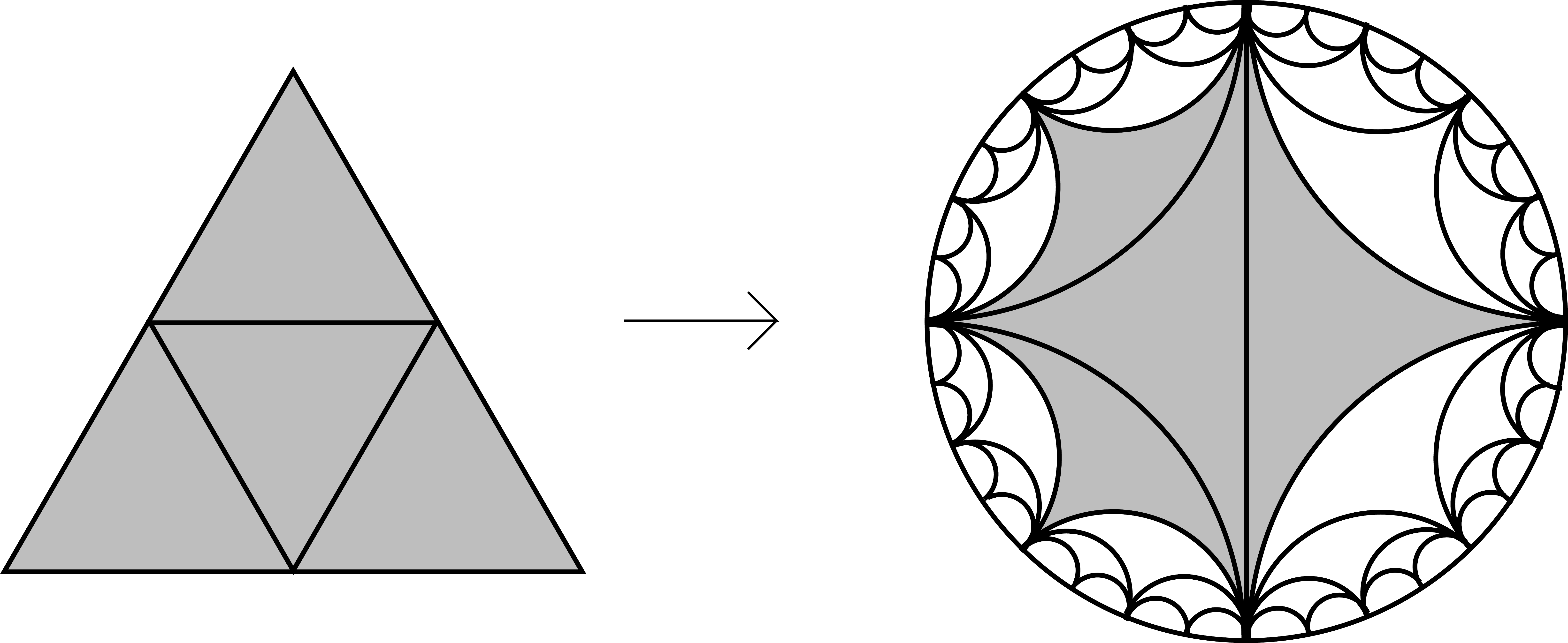}
    \caption{A simplicial embedding of $\mathcal{A}(S_{0,3})$ into $\mathcal{A}(S_{1,1})$}
    \label{complexity three counter}
\end{figure}

\begin{figure}[h]
    \labellist
        \pinlabel {$a$} at 420 0
    \endlabellist
    \centering
    \includegraphics[scale=.11]{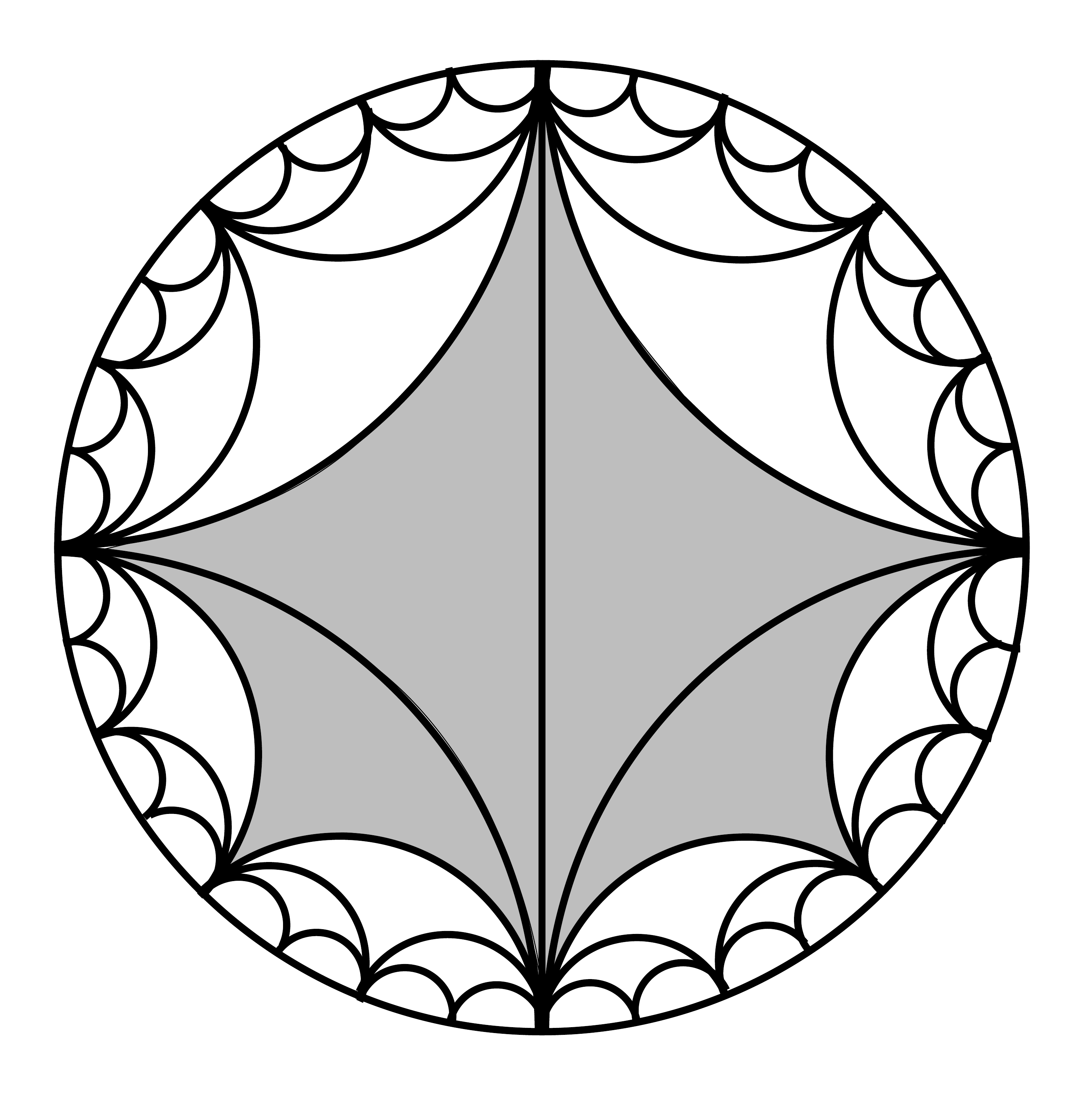}
    \caption{A rigid set in $\mathcal{A}(S_{1,1})$}
    \label{rigid set in farey graph}
\end{figure}

\begin{proof}[Proof of Theorem \ref{main theorem one general} for $S=S_{1,1}$.]

Let $\mathcal{X}$ be the simplicial subcomplex of $\arc$ indicated in grey in Figure \ref{rigid set in farey graph}. Let $S'$ be such that $\text{dim}(\arc)\geq\text{dim}(\arcp)$ and let $\lambda:\mathcal{X}\rightarrow \arcp$ be a locally injective simplicial map. Observe that $\mathcal{X}$ is contained in the closure of the star of the vertex marked $a$, hence $\lambda$ is injective. Since $\lambda$ inectively maps a simplex of dimension 2 into $\arcp$, $\arcp$ must not have dimension less than 2. Further, since there are five vertices in $\mathcal{X}$ connected to $a$ by an edge in $\mathcal{X}$ and since every vertex in $\mathcal{A}(S_{0,3})$ has degree at most four, $S'\neq S_{0,3}$, hence $S'=S_{1,1}$. Now, $\arc$ is connected and each edge borders exactly two triangles, so an induction argument shows that $\lambda$ can be extended uniquely to to an automorphism of $\arc$. Then we again apply the results of Irmak--McCarthy.

\end{proof}

Now we can give an exhaustion of $\arc$ by finite rigid sets in the cases that $S$ is $S_{0,1}$, $S_{0,2}$, and $S_{1,1}$.

\begin{proof}[Proof of Theorem \ref{main theorem two} for $S_{0,1}$, $S_{0,2}$, and $S_{1,1}$.]
Suppose $S$ is $S_{0,1}$ or $S_{0,2}$. Then as discussed above, $\arc$ is finite, so we can take $\mathcal{X}_i=\arc$ for all $i\in \N$. Now suppose $S$ is $S_{1,1}$. Let $\mathcal{X}_0$ be the finite rigid set of $\arc$ from the proof above. For $i\in \N$, let $\mathcal{X}_{i+1}$ be the simplicial subcomplex of $\arc$ containing $\mathcal{X}_{i}$ and any triangles which share a side with a triangle in $\mathcal{X}_i$. Then $(\mathcal{X}_i)_{i\in \N}$ is an exhaustion of $\arc$ by finite rigid sets by the same argument as above.
\end{proof}

\section{The General Case}

In this section, $S$ and $S'$ will be surfaces with $\text{dim}(\arc) \geq \text{dim}(\arcp)$ and $\text{dim}(\arc) > 2$. Throughout the section, we frequently assume the existence of a local injection from some subset of $\arc$ into $\arcp$. Note that if if this subset contains a maximal simplex of $\arc$, ie a triangulation of $S$, that the dimension of $\arcp$ cannot be less than that of $\arc$, hence $\text{dim}(\arc)=\text{dim}(\arcp)$. We will utilize this fact without making any further note of it.

Suppose $V=\{\sigma_1, \sigma_2, \ldots, \sigma_n\}$ is a collection of simplices of a simplicial complex $C$. The \emph{span of $V$}, $\text{Span}_C(V)$, refers to the set of all simplices $\tau$ of $C$ such that each vertex of $\tau$ is the vertex of a simplex in $V$. Observe that $V$ finite implies that $\text{Span}_C(V)$ is finite as well. 

Roughly speaking, the proof of Theorem \ref{main theorem one general} will proceed as follows: We will include in $\mathcal{X}$ (the span of) a triangulation $T$ of $S$ which maps to a triangulation $T'$ of $ S'$ under any locally injective map. We first show that, by adding finitely many arcs to $\mathcal{X}$, we can guarantee that triangles of $T$ map to triangles of the same type (embedded or non-embedded) in $T'$. Then we show that, by adding finitely many more arcs to $\mathcal{X}$, we can guarantee that the orientations of adjacent triangles in $T'$ match so that the map $T\rightarrow T'$ can be extended to a homeomorphism $H:S\rightarrow S'$. We use Proposition \ref{prop: connectedness} to show that by including finitely many more arcs in $\mathcal{X}$ we can guarantee that any locally injective simplicial map $\lambda:\mathcal{X}\rightarrow \arcp$ agrees with the induced map $H_*$ on all of $\mathcal{X}$. Finally, we show uniqueness by proving that any other such homeomorphism $H'$ has induced map equal to $H_*$ and then applying the results of Irmak--McCarthy.

First, we need the following (cf \cite[Prop. 3.1]{irmak2010injective}).

\begin{lemma} \label{detectible}
If $(a,b)$ is a pair of arcs with $i(a,b)=1$, then there exists a finite simplicial subcomplex $\mathcal{B}$ of $\arc$ containing $a$ and $b$ with the following property: If $\mathcal{Y}$ is a simplicial subcomplex of $\arc$ which contains $\mathcal{B}$ and $\lambda: \mathcal{Y} \rightarrow \arcp$ a locally injective simplicial map, then $i(\lambda(a), \lambda(b)) = 1$. 
\end{lemma}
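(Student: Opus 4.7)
The plan is to set $\mathcal{B}=T_a\cup T_b$, where $T_a$ and $T_b$ are triangulations of $S$ with $a\in T_a$, $b\in T_b$, and $T_a\backslash\{a\}=T_b\backslash\{b\}$; such triangulations exist because $i(a,b)=1$, as recalled in Section 2. This $\mathcal{B}$ is clearly a finite simplicial subcomplex of $\arc$ containing $a$ and $b$. Suppose $\mathcal{Y}\supseteq\mathcal{B}$ and $\lambda:\mathcal{Y}\rightarrow\arcp$ is a locally injective simplicial map. I would then argue in three steps.

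First I would show that $\lambda(T_a)$ and $\lambda(T_b)$ are triangulations of $S'$. Since $T_a$ lies inside the star of each of its vertices, local injectivity forces $\lambda|_{T_a}$ to be injective, so $\lambda(T_a)$ is a simplex of $\arcp$ of dimension $\text{dim}(\arc)=6g+3n-7$. The standing hypothesis $\text{dim}(\arcp)\leq\text{dim}(\arc)$ from this section then forces equality of dimensions, so $\lambda(T_a)$ is a maximal simplex of $\arcp$, hence a triangulation of $S'$. The same argument applies to $\lambda(T_b)$.

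Next I would establish that $\lambda(a)\neq\lambda(b)$. Because $\text{dim}(\arc)>2$, the common codimension-one face $T_a\cap T_b=T_a\backslash\{a\}=T_b\backslash\{b\}$ is nonempty, so I may choose some $c\in T_a\cap T_b$. Then $c$ is disjoint from $a$ (both lie in $T_a$) and from $b$ (both lie in $T_b$), so $\{a,c\},\{b,c\}\subseteq\mathcal{B}\subseteq\mathcal{Y}$, placing $a$ and $b$ in the star of $c$. Local injectivity at $c$ then yields $\lambda(a)\neq\lambda(b)$.

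Finally, the injectivity of $\lambda$ on $T_a$ and on $T_b$ gives
\[
\lambda(T_a)=\lambda(T_a\cap T_b)\sqcup\{\lambda(a)\}\quad\text{and}\quad\lambda(T_b)=\lambda(T_a\cap T_b)\sqcup\{\lambda(b)\},
\]
so since $\lambda(a)\neq\lambda(b)$, these are two distinct triangulations of $S'$ that agree on all but one arc. They are therefore related by a flip, and hence $i(\lambda(a),\lambda(b))=1$, as required. The one genuinely subtle step is producing $\lambda(a)\neq\lambda(b)$: without this, both $\lambda(T_a)$ and $\lambda(T_b)$ could a priori be the same triangulation of $S'$, collapsing the flip and allowing any intersection number. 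Fortunately the arcs of $T_a\cap T_b$ themselves supply a witness, so no vertices beyond $T_a\cup T_b$ need to be added to $\mathcal{B}$.
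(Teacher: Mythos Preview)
Your proof is correct and is essentially the same as the paper's. The paper also takes $\mathcal{B}$ to be (the span of) $T_a\cup T_b$ and deduces that $\lambda(T_a)$ and $\lambda(T_b)$ are distinct triangulations of $S'$ differing in a single arc; the only difference is that the paper compresses your second step into the one-line assertion ``$\lambda|_\mathcal{B}$ is injective since $\lambda$ is locally injective,'' which is justified by exactly the star-of-$c$ argument you wrote out.
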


\begin{proof}
There exist triangulations $T_a$ and $T_b$ of $S$ which share all arcs except for $a\in T_a$ and $b\in T_b$. Let $T_0=T_a\cap T_b$. We define:
\[\mathcal{B} = \text{Span}_{\arc}(T_0 \cup \{a,b\}).\]

Now suppose $\mathcal{Y}$ is a simplicial subcomplex of $\arc$ which contains $\mathcal{B}$ and $\lambda: \mathcal{Y} \rightarrow \arcp$ a locally injective simplicial map. Note that $\lambda|_\mathcal{B}$ is injective since $\lambda$ is locally injective. Since the simplex spanned by $T_0$ has codimension one in $\arc$, $T_0'=\lambda(T_0)$ has codimension one in $\arcp$. Let $a'=\lambda(a)$ and $b'=\lambda(b)$. Then $\lambda(T_a)=\{a'\} \cup T_0'$ and $\lambda(T_b)=\{b'\} \cup T_0'$ are both triangulations of $S'$. It follows that $i(a',b')=1$. 
\end{proof}

We will also use the following result of Irmak--McCarthy:

\begin{prop}\label{nice triangulations} (\cite{irmak2010injective}, Proposition 3.2).
Let $\Delta$ be an embedded triangle on $S$ with sides $a$, $b$, and $c$. Then there exists a triangulation $T$ on $S$ containing $a$, $b$, and $c$ such that the unique triangles $\Delta_a, \Delta_b$, and $\Delta_c$ of $T$ on $S$ which are different from $\Delta$ and have, respectively, $a$ as a side, $b$ as a side, and $c$ as a side, are distinct triangles of $T$ on $S$.
\end{prop}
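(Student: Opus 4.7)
The plan is to start with any triangulation $T_0$ of $S$ containing $a, b, c$ (such a triangulation exists because $\{a, b, c\}$ is a pairwise disjoint collection of arcs and hence extends to a maximal one), then to modify $T_0$ by flips performed on arcs of the complement $S \setminus \text{int}(\Delta)$, leaving $a, b, c$ fixed throughout, until the three triangles $\Delta_a, \Delta_b, \Delta_c$ of $T$ on the far sides of $a, b, c$ from $\Delta$ become pairwise distinct.

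Label the vertices of $\Delta$ so that $a$ joins $p$ to $q$, $b$ joins $q$ to $r$, and $c$ joins $r$ to $p$. Suppose at some stage $\Delta_a = \Delta_b =: \Delta'$. Then $\Delta'$ has both $a$ and $b$ as sides; since $a$ and $b$ have disjoint interiors and share only the vertex $q$, the third side $d$ of $\Delta'$ must be an arc from $p$ to $r$. The critical point is that $d \neq c$: if $d = c$, then $\Delta$ and $\Delta'$ together form a closed subsurface of $S$ with two triangles and three edges, and an Euler characteristic computation shows this subsurface is homeomorphic to either $S_{0,3}$ or $S_{1,1}$. Since $S$ is connected, this forces $S$ itself to be $S_{0,3}$ or $S_{1,1}$, both of which have $\text{dim}(\arc) \leq 2$ and so are excluded in the general setting where the proposition is applied. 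Having $d \neq c$, flip $d$: the triangles $\Delta'$ and its neighbor $\Delta''$ across $d$ are replaced by two new triangles meeting along the opposite diagonal $d^\ast$, and the arcs $a$ and $b$ end up on different new triangles, so $\Delta_a \neq \Delta_b$ in the resulting triangulation. The cases $\Delta_a = \Delta_c$ and $\Delta_b = \Delta_c$ are analogous.

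The main obstacle is that a single flip may inadvertently create a new coincidence — for instance, if $\Delta_c = \Delta''$ before the flip (so $c$ is a side of $\Delta''$), then after the flip $c$ may land on the same new triangle as $a$ or $b$, producing $\Delta_c = \Delta_a$ or $\Delta_c = \Delta_b$. Handling this requires a careful case analysis: one may need to perform additional flips, choose a different initial $T_0$, or carry out the analogous move when $\Delta' \cup \Delta''$ is not an embedded quadrilateral (for example when $\Delta''$ is non-embedded). However, the simultaneous coincidence $\Delta_a = \Delta_b = \Delta_c$ is itself ruled out by the same Euler characteristic argument used to show $d \neq c$, and so a finite sequence of such flips will ultimately produce a triangulation $T$ in which $\Delta_a, \Delta_b, \Delta_c$ are all distinct.
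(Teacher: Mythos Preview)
The paper does not prove this proposition at all: it is quoted verbatim as Proposition~3.2 of Irmak--McCarthy \cite{irmak2010injective} and used as a black box in the proof of Lemma~\ref{embedded}. So there is no ``paper's own proof'' to compare against; any assessment must stand on the internal merits of your argument.

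Your approach via flips is natural, and the first half is sound: if $\Delta_a=\Delta_b=\Delta'$, the third side $d$ of $\Delta'$ really is distinct from $c$ (otherwise $\Delta\cup\Delta'$ is all of $S$ and $S\in\{S_{0,3},S_{1,1}\}$), and flipping $d$ does separate $a$ and $b$ into different triangles. The gap is exactly where you yourself flag it. You write that ``handling this requires a careful case analysis'' and that ``a finite sequence of such flips will ultimately produce'' the desired triangulation, but you never supply a monovariant, a bound on the number of flips, or the promised case analysis. Ruling out the triple coincidence $\Delta_a=\Delta_b=\Delta_c$ only tells you that at each stage at most one pair coincides; it does not prevent the procedure from cycling through the three possible coincidences indefinitely. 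Concretely: if before your flip $\Delta_c=\Delta''$ (so $c$ is a side of the quadrilateral $\Delta'\cup\Delta''$), then after the flip $c$ lies on one of the two new triangles---the one containing $a$ or the one containing $b$---and you have traded the coincidence $\Delta_a=\Delta_b$ for $\Delta_a=\Delta_c$ or $\Delta_b=\Delta_c$. Nothing you have written rules out returning to the original configuration after three such moves.

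To close the gap you must either carry out the case analysis in full (tracking where $c$ sits relative to $\Delta''$ and, when necessary, flipping a different arc first), or abandon the iterative scheme in favor of a direct construction: cut $S$ open along $\partial\Delta$ to obtain a surface $\Sigma$ with a single triangular boundary component, observe that the hypothesis $\dim\arc>2$ forces $\Sigma$ to have enough room (an interior marked point, positive genus, or extra boundary identifications) to place a single new arc in $\Sigma$ emanating from the vertex $q=a\cap b$ that separates the boundary edges $a$ and $b$ into distinct complementary regions, and then extend to a full triangulation. Either route is workable, but as written your argument stops at a sketch.
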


Applying Proposition \ref{nice triangulations}, we deduce the following (cf \cite[Prop. 3.3]{irmak2010injective}).

\begin{lemma}\label{embedded}
If $a$, $b$, and $c$ are the edges of an embedded triangle on $S$, then there exists a finite simplicial subcomplex $\mathcal{C}$ of $\arc$ containing $a$, $b$, and $c$ with the following property: If $\mathcal{Y}$ is a simplicial subcomplex of $\arc$ which contains $\mathcal{C}$ and $\lambda: \mathcal{Y} \rightarrow \arcp$ a locally injective simplicial map, then $\lambda(a)$, $\lambda(b)$, and $\lambda(c)$ are the edges of an embedded triangle on $S'$. 
\end{lemma}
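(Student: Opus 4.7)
The plan is to apply Proposition \ref{nice triangulations} to extend $\{a,b,c\}$ to a triangulation of $S$ with nice local structure, then augment with flip partners supplied by Lemma \ref{detectible} so that the image triangulation is combinatorially forced to contain an embedded triangle with sides $\lambda(a),\lambda(b),\lambda(c)$.

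First, I would apply Proposition \ref{nice triangulations} to fix a triangulation $T$ of $S$ containing $a,b,c$ such that the triangles $\Delta_a,\Delta_b,\Delta_c$ of $T$ across $a,b,c$ from $\Delta=\{a,b,c\}$ are pairwise distinct and distinct from $\Delta$. For each $x\in\{a,b,c\}$, let $x^*$ be the arc obtained by flipping $T$ across $x$ (so $i(x,x^*)=1$), let $T_x=(T\setminus\{x\})\cup\{x^*\}$, and let $\mathcal{B}_x$ be the subcomplex supplied by Lemma \ref{detectible} applied to the pair $(x,x^*)$. I would then set
\[
\mathcal{C}=\text{Span}_{\arc}(T\cup T_a\cup T_b\cup T_c)\cup\mathcal{B}_a\cup\mathcal{B}_b\cup\mathcal{B}_c,
\]
a finite simplicial subcomplex of $\arc$ containing $a$, $b$, and $c$.

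Given $\mathcal{Y}\supseteq\mathcal{C}$ and a locally injective simplicial map $\lambda:\mathcal{Y}\to\arcp$, local injectivity sends the maximal simplex $T$ to a triangulation $T':=\lambda(T)$ of $S'$ (and hence forces $\dim(\arc)=\dim(\arcp)$). Lemma \ref{detectible} gives $i(\lambda(x),\lambda(x^*))=1$ for each $x\in\{a,b,c\}$, so each $\lambda(T_x)$ is obtained from $T'$ by the flip replacing $\lambda(x)$ with $\lambda(x^*)$, and the two triangles of $T'$ meeting along $\lambda(x)$ assemble into a quadrilateral $Q_x$ whose diagonals are $\lambda(x)$ and $\lambda(x^*)$.

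The key step is to show that $\lambda(a),\lambda(b),\lambda(c)$ bound a single triangle of $T'$. In $T_a$ the two triangles incident to $a^*$ take the form $\{a^*,b,e\}$ and $\{a^*,c,d\}$, where $d,e$ are the sides of $\Delta_a$ distinct from $a$; these $2$-simplices sit inside $\text{Span}_{\arc}(T_a)\subset\mathcal{C}$ and therefore map under $\lambda$ to $2$-simplices of $\arcp$. Combined with the quadrilateral description of $Q_a$, I would argue that the four sides of $Q_a$ are exactly $\{\lambda(b),\lambda(c),\lambda(d),\lambda(e)\}$, paired so that $\lambda(b)$ and $\lambda(c)$ lie in the same half of $Q_a$; repeating the argument for $Q_b$ and $Q_c$ then forces one of the two triangles of $T'$ meeting along $\lambda(a)$ to have sides precisely $\{\lambda(a),\lambda(b),\lambda(c)\}$. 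Because $\{a,b,c\}$ is a $2$-simplex and $\lambda$ is simplicial, the three images are already pairwise distinct, so this triangle is automatically embedded. The distinctness of $\Delta_a,\Delta_b,\Delta_c$ supplied by Proposition \ref{nice triangulations} is essential here: it rules out degenerate identifications in the image triangulation (in particular, it prevents the candidate triangle at $\lambda(a)$ from collapsing into a non-embedded triangle that would corrupt the side-by-side pairing in $Q_a$).

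The main obstacle will be the combinatorial verification that the preserved $2$-simplex and flip structure across $\lambda$ really does pin down the four sides of each quadrilateral $Q_x$, together with the correct pairing into its two triangular halves, rather than allowing the images $\lambda(b),\lambda(c),\lambda(d),\lambda(e)$ to appear elsewhere in $T'$. Once that identification is carried out carefully using the distinctness hypothesis from Proposition \ref{nice triangulations} and the flip-preservation provided by Lemma \ref{detectible}, the conclusion follows.
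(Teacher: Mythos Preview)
Your construction does not carry enough information to force $\lambda(a),\lambda(b),\lambda(c)$ to bound a triangle of $T'$. The problem is in the step where you ``argue that the four sides of $Q_a$ are exactly $\{\lambda(b),\lambda(c),\lambda(d),\lambda(e)\}$.'' Since $a^*$ is disjoint from \emph{every} arc of $T\setminus\{a\}$, the fact that $\{a^*,b,e\}$ and $\{a^*,c,d\}$ are $2$-simplices of $\mathcal{C}$ is automatic and says nothing beyond pairwise disjointness; it does not imply that $\lambda(b),\lambda(e)$ (or $\lambda(c),\lambda(d)$) are sides of a triangle of $T'_a$ incident to $\lambda(a^*)$. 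In fact, with your $\mathcal{C}$ one has essentially $\mathcal{C}=\text{Span}_{\arc}(T\cup\{a^*,b^*,c^*\})$ (the $\mathcal{B}_x$ from Lemma \ref{detectible} are already contained in this span), whose only maximal simplices are $T,T_a,T_b,T_c$. Then \emph{any} bijection $\sigma:T\to T'$ onto a triangulation of $S'$ in which $\sigma(a),\sigma(b),\sigma(c)$ are flippable extends to a locally injective simplicial map $\lambda:\mathcal{C}\to\arcp$ by setting $\lambda(x^*)$ equal to the flip of $\sigma(x)$ in $T'$; choosing $\sigma$ so that $\sigma(a),\sigma(b),\sigma(c)$ do not bound a triangle of $T'$ gives a counterexample to the claimed property of $\mathcal{C}$.

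The paper avoids this by adding arcs that each intersect \emph{two} of $a,b,c$ rather than one: three arcs $d,e,f$ with $i(d,a)=i(d,b)=1$, $i(e,b)=i(e,c)=1$, $i(f,c)=i(f,a)=1$, disjoint from all other arcs of $T$ and from each other, together with the six corresponding subcomplexes from Lemma \ref{detectible}. An arc in $S'$ meeting exactly two arcs of $T'$ once each forces those two arcs to be sides of a common embedded triangle of $T'$, and this is precisely the linking information your flip arcs $a^*,b^*,c^*$ fail to provide. The remaining case analysis then rules out the possibility that these three forced triangles are distinct.
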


\begin{proof}

By Proposition \ref{nice triangulations}, there exists a triangulation $T$ of $S$ containing $\{a,b,c\}$ so that the unique triangles $\Delta_a, \Delta_b$, and $\Delta_c$ of $T$ on $S$ which are different from $\Delta$ and have, respectively, $a$ as a side, $b$ as a side, and $c$ as a side, are distinct triangles of $T$ on $S$.

Let $P_1$ be the vertex of $\Delta_a$ opposite $a$, $P_2$ the vertex of $\Delta_b$ opposite $b$, and $P_3$ the vertex of $\Delta_c$ opposite $c$ (see Figure \ref{double triangle}). There exists an arc $d$ connecting $P_1$ to $P_2$ which intersects $a$ and $b$ once and is disjoint from all other arcs in $T$. Similarly, there exists an arc $e$ connecting $P_2$ to $P_3$ which intersects $b$ and $c$ once and is disjoint from all other arcs in $T$. And finally, there is an arc $f$ connecting $P_1$ to $P_3$ which intersects $a$ and $c$ once and is disjoint from all other arcs in $T$. Note that $d$, $e$, and $f$ are pairwise disjoint.

\begin{figure}[h]
    \centering
    \labellist
        \pinlabel {$P_1$} at 25 950
        \pinlabel {$P_2$} at 1025 950
        \pinlabel {$P_3$} at 550 20
        \pinlabel {$a$} at 455 675 
        \pinlabel {$b$} at 610 675
        \pinlabel {$c$} at 535 540 
        \pinlabel {$d$} at 410 820 
        \pinlabel {$e$} at 780 675
        \pinlabel {$f$} at 450 375 
    \endlabellist
    \includegraphics[scale=.11]{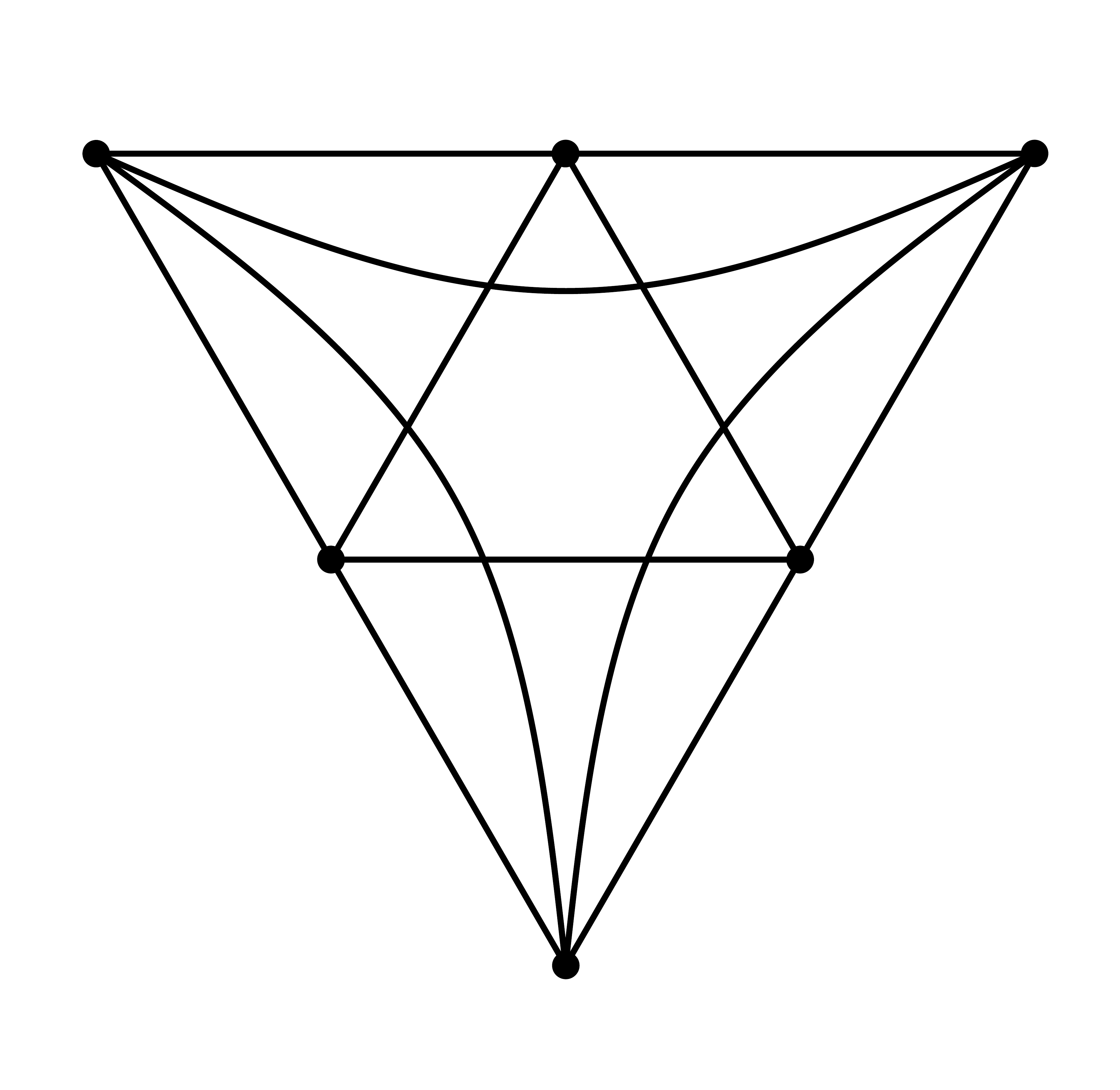}
    \caption{Arc configurations. Vertices and arcs around the perimeter may be identified.}
    \label{double triangle}
\end{figure}

Now, consider the following six pairs of arcs: $\mathscr{R}_1 = (a,d),$ $\mathscr{R}_2 =(a,f),$ $\mathscr{R}_3 =(b,d),$ $\mathscr{R}_4 =(b,e),$ $\mathscr{R}_5 =(c,e),$ and $\mathscr{R}_6 =(c,f)$. Recall that each of these pairs has intersection number one. Let $\mathcal{B}_i$ be the finite simplicial complex of $\arc$ from Lemma \ref{detectible} corresponding to $\mathscr{R}_i$ for each $1\leq i \leq 6$.

We define:
\[\mathcal{C} = \text{Span}_{\arc}\left(T\cup \{d,e,f\} \cup \bigcup_{1\leq i \leq 6} \mathcal{B}_i\right).\]

Suppose $\mathcal{Y}$ is any simplicial subcomplex of $\arc$ which contains $\mathcal{C}$ and $\lambda: \mathcal{Y} \rightarrow \arcp$ is a locally injective simplicial map. Let $T' = \lambda(T)$; note that it is a triangulation of $S'$. Additionally, let $a'=\lambda(a)$, $b'=\lambda(b)$, $c'=\lambda(c)$, $d'=\lambda(d)$, $e'=\lambda(e)$, and $f'=\lambda(f)$. Lemma \ref{detectible} and the local injectivity of $\lambda$ guarantee each pair in $\{a', b', c', d', e', f'\}$ has the same intersection number as its preimage under $\lambda$. 

Since $d'$ intersects $a'$ and $b'$ once each and is disjoint from all other arcs in the triangulation $T'$, it must be the case that $a'$ and $b'$ border an embedded triangle in $T'$ --- call it $\Delta_1$. Analogously, since $e'$ intersects $b'$ and $c'$ once each and is disjoint from all other arcs in the triangulation $T'$, there is an embedded triangle $\Delta_2$ in $T'$ with sides $b'$ and $c'$ and since $f'$ intersects $a'$ and $c'$ once each and is disjoint from all other arcs in the triangulation $T'$, there is an embedded triangle $\Delta_3$ in $T'$ with sides $a'$ and $c'$. Suppose that the third side of $\Delta_1$ is $r'$, the third side of $\Delta_2$ is $s'$, and the third side of $\Delta_3$ is $t'$. If $r'=c'$, $s'=a'$, or $t'=b'$, then we are done. 

Suppose $r'\neq c'$, $s' \neq a'$, and $t' \neq b'$, hence $\Delta_1\neq \Delta_2$, $\Delta_2 \neq \Delta_3$ and $\Delta_3 \neq \Delta_1$. Up to homeomorphism (and ignoring admissible identifications among arcs and among vertices of the triangles), there are four configurations, and these depend on the relative orientations of $\Delta_1$ and $\Delta_2$, and the relative orientation of $\Delta_3$ with respect to $\Delta_1$ and $\Delta_2$. See Figure \ref{triangles for embedded case}. In each of these cases, $d'$ and $e'$ must intersect, which is a contradiction. 
\end{proof}

\begin{figure}[h]
    \labellist
        \pinlabel{(i)} at 325 20
        \pinlabel{(ii)} at 1300 20
        \pinlabel{(iii)} at 2275 20
        \pinlabel{(iv)} at 3250 20
        %(i) upper
        \pinlabel{$a'$} at 290 1640
        \pinlabel{$r'$} at 585 1640
        \pinlabel{$s'$} at 585 1220
        \pinlabel{$t'$} at 230 1120
        \pinlabel{$a'$} at 75 1270
        \pinlabel{$c'$} at 375 1275
        \pinlabel{$b'$} at 440 1480
        %(ii) upper
        \pinlabel{$a'$} at 1255 1640
        \pinlabel{$r'$} at 1550 1640
        \pinlabel{$s'$} at 1550 1220
        \pinlabel{$a'$} at 1195 1120
        \pinlabel{$t'$} at 1040 1270
        \pinlabel{$c'$} at 1340 1275
        \pinlabel{$b'$} at 1405 1480
        %(iii) upper
        \pinlabel{$a'$} at 2020 1640
        \pinlabel{$r'$} at 2315 1640
        \pinlabel{$c'$} at 2220 1285
        \pinlabel{$t'$} at 2360 1120
        \pinlabel{$s'$} at 2000 1220
        \pinlabel{$a'$} at 2505 1285
        \pinlabel{$b'$} at 2170 1480
        %(iv) upper
        \pinlabel{$a'$} at 2985 1640
        \pinlabel{$r'$} at 3280 1640
        \pinlabel{$c'$} at 3185 1285
        \pinlabel{$a'$} at 3325 1120
        \pinlabel{$s'$} at 2960 1220
        \pinlabel{$t'$} at 3470 1285
        \pinlabel{$b'$} at 3135 1480
        %(i) lower
        \pinlabel{$a'$} at 220 630
        \pinlabel{$r'$} at 585 820
        \pinlabel{$s'$} at 585 200
        \pinlabel{$t'$} at 100 180
        \pinlabel{$e'$} at 230 735
        \pinlabel{$c'$} at 375 330
        \pinlabel{$b'$} at 490 650
        \pinlabel{$d'$} at 510 400
        %(ii)lower
        \pinlabel{$c'$} at 1320 630
        \pinlabel{$s'$} at 1550 800
        \pinlabel{$t'$} at 1350 495
        \pinlabel{$r'$} at 1065 180
        \pinlabel{$e'$} at 1185 580
        \pinlabel{$a'$} at 1320 360
        \pinlabel{$b'$} at 1035 550
        \pinlabel{$d'$} at 1185 410
        %(iii)lower
        \pinlabel{$a'$} at 2275 615
        \pinlabel{$t'$} at 2515 800
        \pinlabel{$r'$} at 2200 495
        \pinlabel{$s'$} at 2030 180
        \pinlabel{$c'$} at 2370 540
        \pinlabel{$b'$} at 2280 385
        \pinlabel{$d'$} at 2290 240
        \pinlabel{$e'$} at 2380 410
        %(iv)lower
        \pinlabel{$b'$} at 3250 625
        \pinlabel{$r'$} at 3440 800
        \pinlabel{$s'$} at 3270 495
        \pinlabel{$t'$} at 2995 180
        \pinlabel{$e'$} at 3115 720
        \pinlabel{$c'$} at 3220 365
        \pinlabel{$d'$} at 3120 590
        \pinlabel{$a'$} at 3120 450
    \endlabellist
    \centering
    \includegraphics[scale=.11]{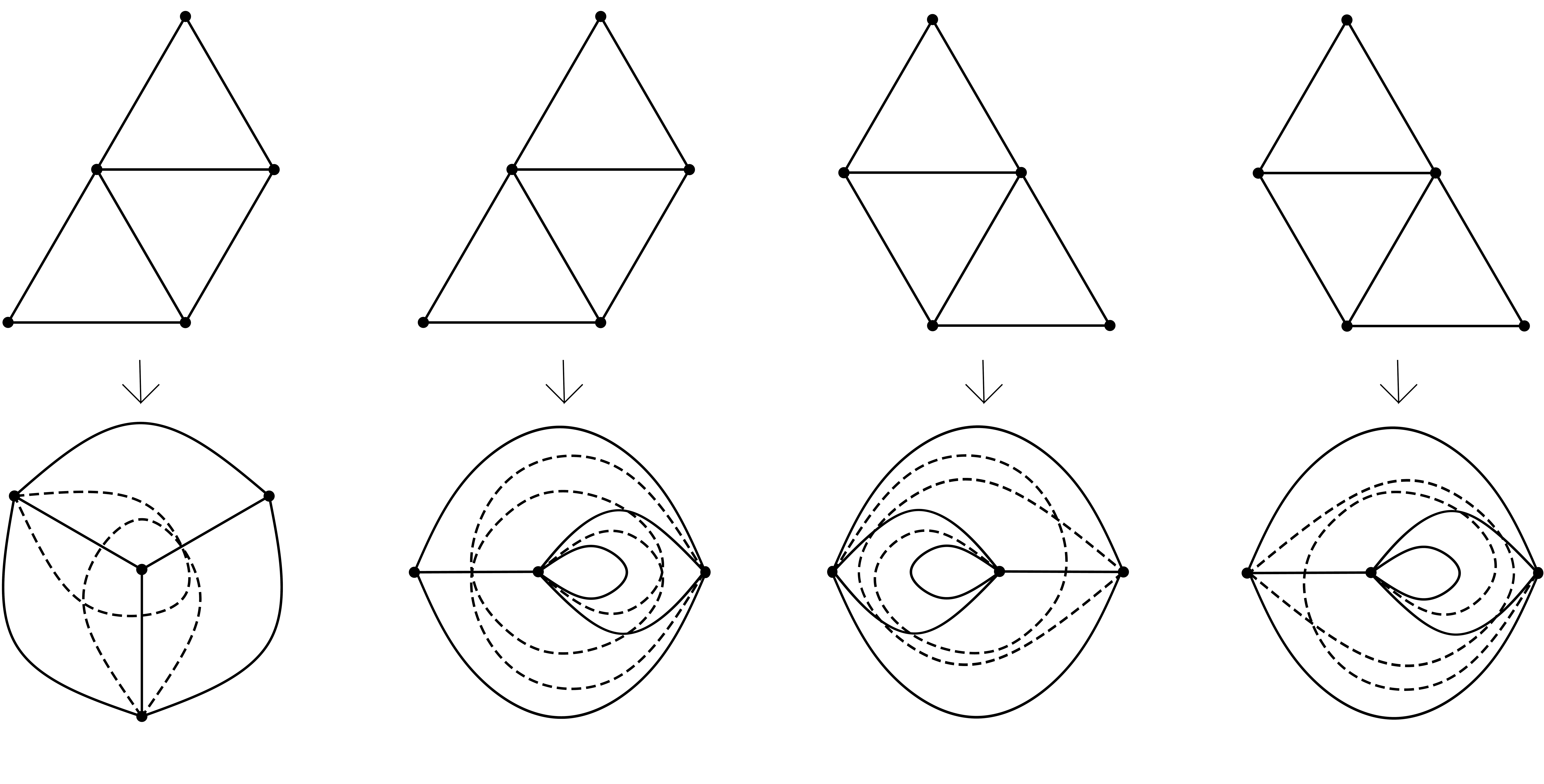}
    \caption{Four possible arrangements of triangles in the proof of Lemma \ref{embedded}. Some additional identifications of vertices and arcs may be made.}
    \label{triangles for embedded case}
\end{figure}

Now we can use Lemma \ref{embedded} to prove the corresponding result for non-embedded triangles (cf \cite[Prop. 3.4]{irmak2010injective}).

\begin{lemma}\label{non-embedded}
If $a$ and $b$ border a non-embedded triangle on $S$ with $a$ the inner arc, then there exists a finite simplicial subcomplex $\mathcal{D}$ of $\arc$ containing $a$ and $b$ with the following property: If $\mathcal{Y}$ is a simplicial subcomplex of $\arc$ which contains $\mathcal{D}$ and $\lambda: \mathcal{Y} \rightarrow \arcp$ a locally injective simplicial map, then $\lambda(a)$ and $\lambda(b)$ border a non-embedded triangle on $S'$ with $\lambda(a)$ as the inner arc. 
\end{lemma}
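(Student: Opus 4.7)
The plan is to choose a triangulation $T$ of $S$ containing $a$ and $b$ in which $\Delta_{ab}$ is the only non-embedded $2$-cell, and then use Lemma~\ref{embedded} together with a counting argument to force $\lambda(a)$ and $\lambda(b)$ to be the inner and outer arcs of the unique remaining non-embedded $2$-cell of $T' = \lambda(T)$. The main technical obstacle is the construction of such a $T$: the disk $D \subset S$ bounded by $b$ has the endpoint of $a$ other than the basepoint of $b$ as its only interior marked point, so $D$ is triangulated by $\{a\}$ alone, contributing precisely $\Delta_{ab}$. It then remains to triangulate $S \smallsetminus D$ using only embedded triangles, which can be done for every $S$ not excluded from the theorem by an argument along the lines of Proposition~\ref{nice triangulations}.

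Having fixed such a $T$, I would apply Lemma~\ref{embedded} to each embedded $2$-cell $\Delta$ of $T$, obtaining a finite simplicial subcomplex $\mathcal{C}_\Delta$ that forces $\lambda$ to send the three sides of $\Delta$ to the sides of an embedded triangle in $\arcp$. Setting
\[
\mathcal{D} \;=\; \text{Span}_{\arc}\!\Big(T \cup \bigcup_\Delta \mathcal{C}_\Delta\Big)
\]
produces a finite simplicial subcomplex of $\arc$ containing $a$ and $b$. For any locally injective simplicial map $\lambda:\mathcal{Y}\to\arcp$ with $\mathcal{Y}\supseteq\mathcal{D}$, the image $T'$ is a triangulation of $S'$ with the same $4g+2n-4$ $2$-cells as $T$, and Lemma~\ref{embedded} accounts for all but one of them as embedded; call the remaining cell $\Delta^*$. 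By the choice of $T$, every arc of $T$ distinct from $a$ and $b$ borders only embedded $2$-cells, so both of its sides in $T'$ are accounted for by the detected embedded cells; the arc $b$ borders $\Delta_{ab}$ and one embedded cell, which accounts for one of its sides; and both sides of $a$ lie in $\Delta_{ab}$, which is not detected. Hence the three arc-sides comprising $\Delta^*$ are precisely two copies of $\lambda(a)$ and one copy of $\lambda(b)$. Thus $\Delta^*$ is a non-embedded $2$-cell of $T'$, and by the observation preceding Figure~\ref{example of non-embedded triangle} that every non-embedded triangle is of the form pictured there, $\lambda(a)$ must be its inner arc and $\lambda(b)$ its outer arc, as required.
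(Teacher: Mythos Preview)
Your counting argument is sound and takes a genuinely different route from the paper. The paper works locally with only five arcs: since $S\neq S_{0,3}$ there is an embedded triangle with sides $b,c,d$ on the other side of $b$, and one introduces a single extra arc $e$ (disjoint from $a,c,d$ and with $i(b,e)=1$) so that $\{a,c,e\}$, $\{a,d,e\}$, $\{b,c,d\}$ are all embedded triangles. Applying Lemma~\ref{embedded} three times and Lemma~\ref{detectible} once, a short configuration check forces $\lambda(a),\lambda(b)$ to bound a non-embedded triangle with $\lambda(a)$ inner. Your global approach instead uses an entire triangulation and a pigeonhole count on $2$-cells; this yields a larger $\mathcal{D}$ but replaces the configuration analysis by pure counting.

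The real gap is the construction of $T$. Proposition~\ref{nice triangulations} concerns extending a given \emph{embedded} triangle to a triangulation whose three neighbouring triangles are pairwise distinct; it says nothing about the remaining triangles being embedded, and it does not start from a non-embedded triangle. What you actually need is that $S\smallsetminus D$ (genus $g$, boundary $b$ with the single marked point $p_1$, and $n-2$ interior marked points) admits an ideal triangulation with all $2$-cells embedded. This is true---for $n=2$ every arc outside $D$ is a loop at $p_1$, so non-embedded triangles cannot occur; for $g\geq 1$ or $n\geq 4$ one can exhibit such a triangulation explicitly; and $S_{0,3}$ is excluded---but it requires its own argument and does not follow from Proposition~\ref{nice triangulations}. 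You should also make explicit why the $N-1$ detected embedded $2$-cells of $T'$ are distinct (distinct embedded triangles of $T$ have distinct side-sets since $S\neq S_{0,3},S_{1,1}$, and $\lambda$ is injective on $T$), so that exactly one $2$-cell $\Delta^*$ is left over.
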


\begin{proof}

Since $S$ is not $S_{0,3}$, there exists an embedded triangle having $b$ as a side. Call the other sides of this triangle $c$ and $d$. Let $e$ be the arc pictured in Figure \ref{non-embedded image}. 

\begin{figure}[h]
    \labellist
        \pinlabel{$a$} at 465 250
        \pinlabel{$e$} at 465 530
        \pinlabel{$b$} at 640 350
        \pinlabel{$c$} at 200 750
        \pinlabel{$d$} at 650 750
    \endlabellist
    \centering
    \includegraphics[scale=.11]{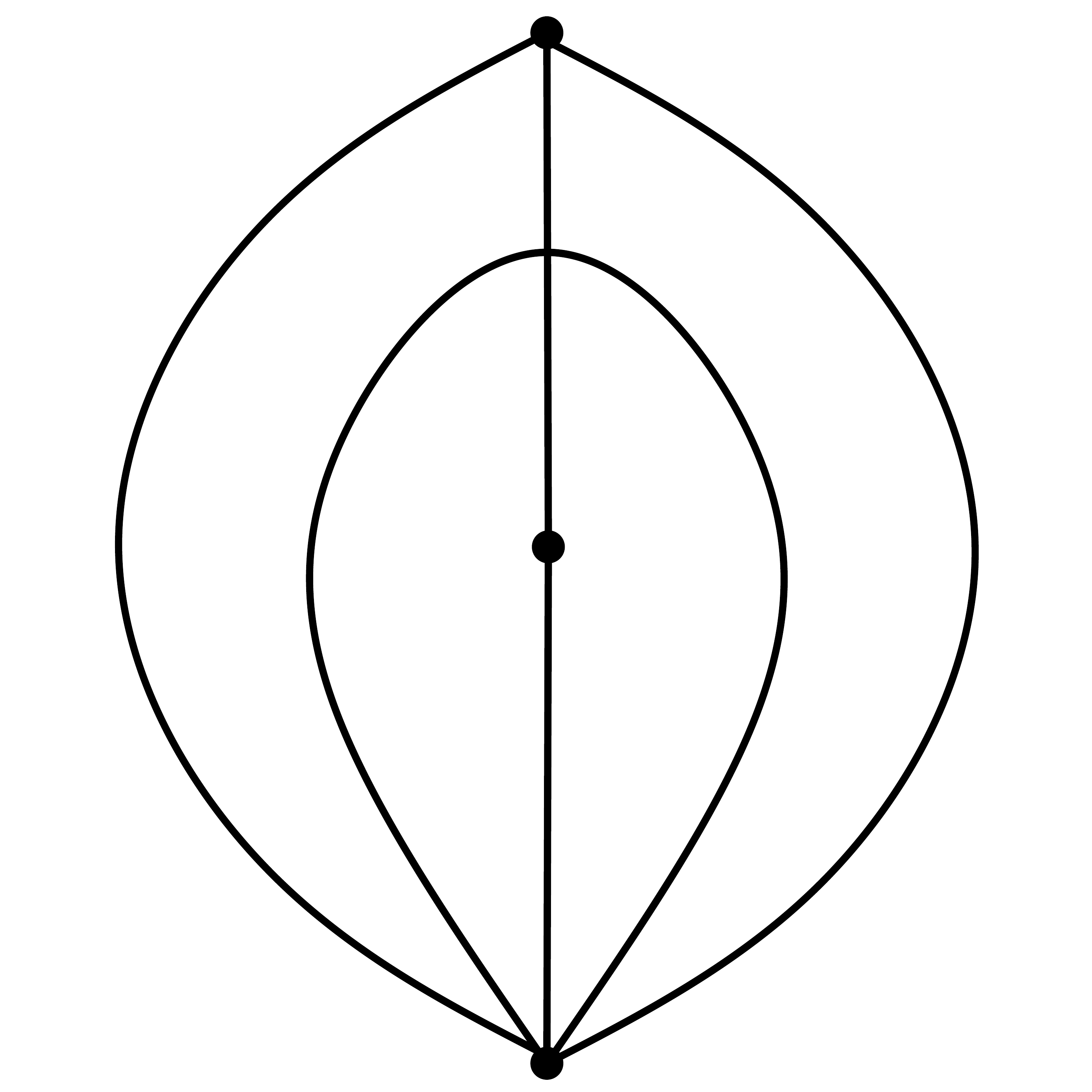}
    \caption{Arc configurations. Outer vertices may be identified. }
    \label{non-embedded image}
\end{figure}

We can see that arcs $a$, $c$, and $e$ border an embedded triangle on $S$, as do arcs $a$, $d$, and $e$. As noted previously, the arcs $b$, $c$, and $d$ also border an embedded triangle. Let $\mathcal{C}_1$, $\mathcal{C}_2$, and $\mathcal{C}_3$, respectively, be the finite simplicial subcomplexes of $\arc$ from Lemma \ref{embedded} corresponding to these triples of arcs. Also, observe that $i(b,e)=1$. Then let $\mathcal{B}$ be the corresponding finite simplicial subcomplex of $\arc$ from Lemma \ref{detectible}. We define:
\[\mathcal{D} = \text{Span}_{\arc}(\{a,b,c,d,e\}\cup \mathcal{B} \cup \mathcal{C}_1 \cup \mathcal{C}_2 \cup \mathcal{C}_3).\]

Let $\mathcal{Y}$ be any simplicial subcomplex of $\arc$ which contains $\mathcal{D}$ and $\lambda: \mathcal{Y} \rightarrow \arcp$ a locally injective simplicial map. Let $a'=\lambda(a)$, $b'=\lambda(b)$, $c'=\lambda(c)$, $d'=\lambda(d)$, and  $e'=\lambda(e)$. Lemma \ref{detectible} and the local injectivity of $\lambda$ guarantee each pair in $\{a', b', c', d', e'\}$ has the same intersection number as the preimage under $\lambda$. By Lemma \ref{embedded}, we know that $a'$, $c'$, and $e'$ border an embedded triangle on $S'$, as do $a'$, $d'$, and $e'$, and as do $b'$, $c'$, and $d'$. Since $b'$ is disjoint from $a'$, $c'$, and $d'$ and intersects $e'$ once, the only possible arrangement of arcs then guarantees that $a'$ and $b'$ border a non-embedded triangle on $S'$ with $a'$ as the inner arc.
\end{proof}

The two lemmas above give conditions such that a locally injective simplicial map takes a triangulation $T$ of $S$ to a triangulation $T'$ of $S'$ in such a way that triangles in $T$ are sent to triangles of the same type (embedded or non-embedded) in $T'$. The following lemma provides conditions under which two triangles in $T$ which share an edge, map to consistently oriented triangles in $T'$ (cf \cite[Prop. 3.5--3.7]{irmak2010injective}).

If $\Delta$ is a triangle, let $\accentset{\circ}{\Delta}$ denote $\Delta\backslash \mathscr{P}_S$, the triangle minus its vertex set. The edges of $\accentset{\circ}{\Delta}$ are the interiors of arcs; however, for simplicity, we will not use a separate notation for them.

\begin{figure}[h]
    \labellist
        \pinlabel{$a$} at 230 640
        \pinlabel{$b$} at 630 640
        \pinlabel{$d$} at 230 200
        \pinlabel{$e$} at 630 210
        \pinlabel{$f$} at 370 560
        \pinlabel{$c$} at 550 470
    \endlabellist
    \centering
    \includegraphics[scale=.11]{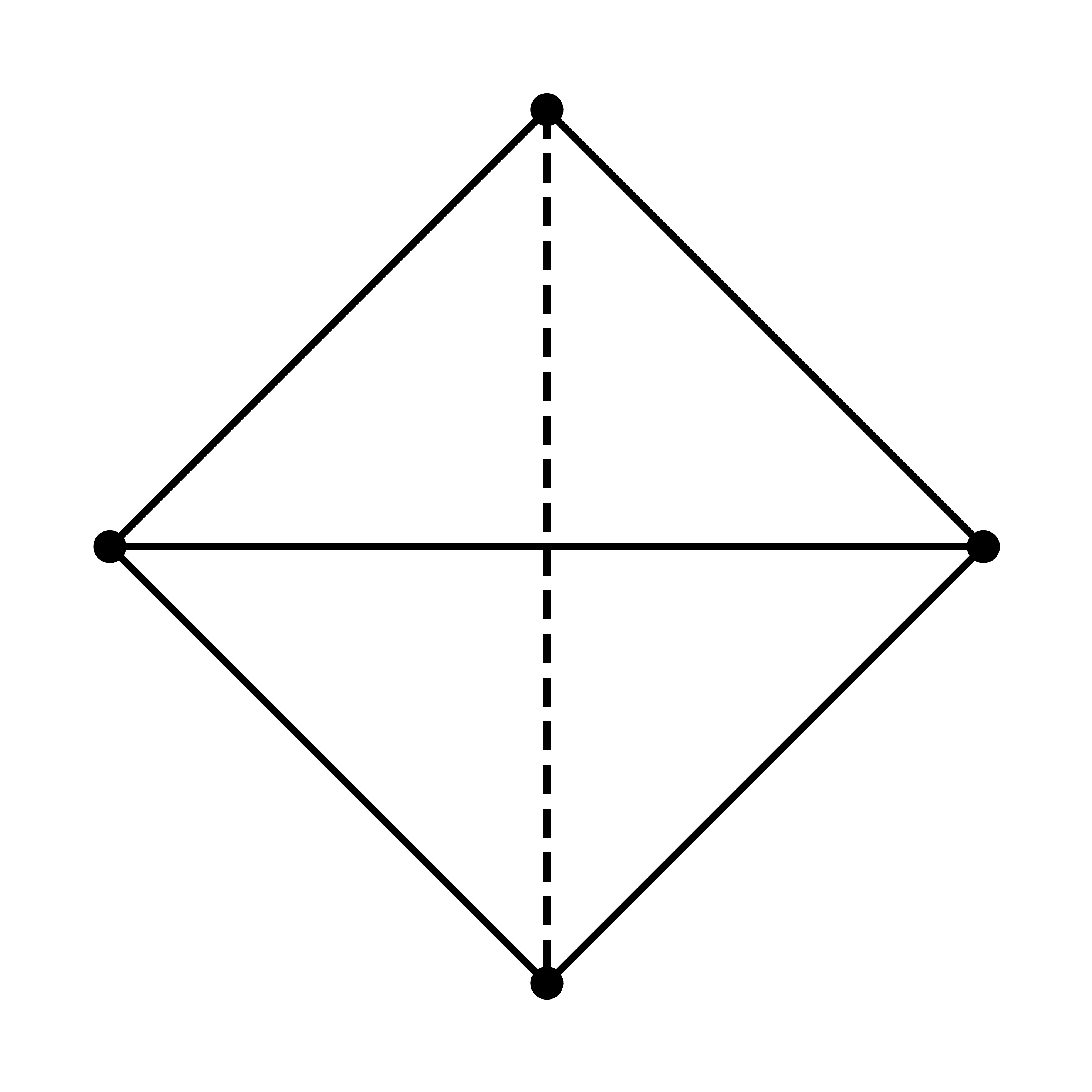}
    \caption{Two embedded triangles sharing a side. We allow for the possibility that $a=d$, $a=e$, $b=d$ or $b=e$. We also allow identifications among vertices.}
    \label{one side}
\end{figure}

\begin{lemma}\label{one side prop}
Suppose $\Delta_1$ is an embedded triangle on $S$ with sides $a$, $b$, and $c$, and $\Delta_2$ an embedded triangle with sides $c$, $d$, and $e$, as shown in Figure \ref{one side}. Then there exists a finite simplicial subcomplex $\mathcal{E}$ of $\arc$ containing $a$, $b$, $c$, $d$, and $e$ with the following property: Let $\mathcal{Y}$ be any simplicial subcomplex of $\arc$ containing $\mathcal{E}$ and $\lambda:\mathcal{Y} \rightarrow \arcp$ be a locally injective simplicial map. Let $a'=\lambda(a)$, $b'=\lambda(b)$, $c'=\lambda(c)$, $d'=\lambda(d)$, and $e'=\lambda(e)$. Then there exist an embedded triangle $\Delta_1'$ on $S'$ with sides $a'$, $b'$, and $c'$, an embedded triangle $\Delta_2'$ on $S'$ with sides $c'$, $d'$, and $e'$, and the natural homeomorphisms $F_1:(\accentset{\circ}{\Delta}_1, a,b,c) \rightarrow (\accentset{\circ}{\Delta}_1', a', b', c')$ and $F_2:(\accentset{\circ}{\Delta}_2, c,d,e) \rightarrow (\accentset{\circ}{\Delta}_2', c', d', e')$ can be made to agree along $c$. 
\end{lemma}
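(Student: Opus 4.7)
The plan is to augment the approach of Lemma \ref{embedded} by adding two auxiliary arcs that encode which corners of $\Delta_1\cup_c\Delta_2$ are glued to which. Once this combinatorial data is forced to transfer to $S'$, the induced homeomorphisms $F_1$ and $F_2$ will send the endpoints of $c$ to the same pair of points in the same order, after which a final isotopy of one of them makes them agree on all of $c$.

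Label so that $P_1$ is the vertex of $\Delta_1$ opposite $c$, $P_2$ is the vertex of $\Delta_2$ opposite $c$, and the endpoints $Q_1, Q_2$ of $c$ are chosen so that $a$ and $d$ share the corner $Q_1$ of the quadrilateral $\Delta_1\cup_c \Delta_2$, while $b$ and $e$ share the corner $Q_2$. Inside this quadrilateral I draw two new arcs from $P_1$ to $P_2$: the arc $g$ passes near $Q_1$ so that $\{a,d,g\}$ bounds an embedded triangle, and $h$ passes near $Q_2$ so that $\{b,e,h\}$ bounds an embedded triangle. By construction $i(g,c)=i(h,c)=1$ and each of $g,h$ is disjoint from $\{a,b,d,e\}$.

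Let $\mathcal{C}_1,\mathcal{C}_2,\mathcal{C}_3,\mathcal{C}_4$ be the finite subcomplexes supplied by Lemma \ref{embedded} for the embedded triangles $\{a,b,c\}$, $\{c,d,e\}$, $\{a,d,g\}$, $\{b,e,h\}$, and let $\mathcal{B}_1,\mathcal{B}_2$ be the complexes from Lemma \ref{detectible} for the pairs $(g,c)$ and $(h,c)$. I set
\[
\mathcal{E}\;=\;\text{Span}_{\arc}\!\left(\{a,b,c,d,e,g,h\}\cup \bigcup_{i=1}^{4} \mathcal{C}_i \cup \mathcal{B}_1 \cup \mathcal{B}_2\right).
\]
For any locally injective $\lambda:\mathcal{Y}\to\arcp$ with $\mathcal{E}\subseteq\mathcal{Y}$, write $a'=\lambda(a)$, and similarly for the other arcs. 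Applying Lemma \ref{embedded} produces embedded triangles $\Delta_1'=\{a',b',c'\}$, $\Delta_2'=\{c',d',e'\}$, $T_g'=\{a',d',g'\}$, $T_h'=\{b',e',h'\}$ on $S'$, and Lemma \ref{detectible} gives $i(g',c')=i(h',c')=1$. From $\Delta_1'$ and $\Delta_2'$ meeting along $c'$, each endpoint of $c'$ is shared between one of $\{a',b'\}$ and one of $\{d',e'\}$, leaving two \emph{a priori} gluing possibilities: a matching case (where the pairs $\{a',d'\}$ and $\{b',e'\}$ meet) and a flipped case (where $\{a',e'\}$ and $\{b',d'\}$ meet). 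The triangle $T_g'$ being embedded forces $a'$ and $d'$ to share an endpoint, and, since $g'$ crosses $c'$ exactly once while $a'$ and $d'$ do not, this shared endpoint must be a vertex of $T_g'$ lying on $c'$; analogously $T_h'$ forces the other endpoint of $c'$ to be shared by $b'$ and $e'$. This rules out the flipped case, so $\Delta_1'$ and $\Delta_2'$ are glued along $c'$ with the same corner-identification as on $S$. Consequently any pair of label-preserving homeomorphisms $F_1,F_2$ in their respective isotopy classes sends $Q_i\mapsto Q_i'$ for $i=1,2$, and an isotopy of $F_2$ supported in a collar of $c$ inside $\Delta_2$ aligns $F_1|_c$ and $F_2|_c$ pointwise.

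The main obstacle is the degenerate situations permitted by the hypothesis, where some of $a,b,d,e$ coincide or some of $P_1,P_2,Q_1,Q_2$ are identified. One has to verify that $g$ and $h$ still exist as essential arcs, that $\{a,d,g\}$ and $\{b,e,h\}$ still bound embedded triangles on $S$, and that the argument on $S'$ still forces the matching case rather than some coincidence of vertices or sides making the flipped case compatible with all the embedded-triangle hypotheses. This is handled by a short case analysis, carried out most conveniently by modeling $\Delta_1\cup_c\Delta_2$ as a rectangle with vertices $P_1,Q_1,P_2,Q_2$ mapping to $S$, cataloguing which edge or vertex identifications can occur, and checking each case in turn.
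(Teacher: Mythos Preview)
Your approach is essentially the paper's: introduce the diagonal of the quadrilateral $\Delta_1\cup_c\Delta_2$ from $P_1$ to $P_2$, force its image to cross $c'$ once and to bound a triangle with $a',d'$, and read off the correct gluing. Two remarks.

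First, your two auxiliary arcs are the same vertex of $\arc$. The interior of $\Delta_1\cup_c\Delta_2$ is an embedded disc in $S$, so any two arcs from $P_1$ to $P_2$ drawn inside it are isotopic rel endpoints; hence $g=h$. The paper accordingly uses a single arc $f$ and a single auxiliary triangle $\{a,d,f\}$. Your second triangle $\{b,e,h\}$ is redundant: once $a'$ and $d'$ are forced to meet at one endpoint of $c'$, the pairing at the other endpoint is automatic.

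Second, and this is a genuine gap, your construction invokes Lemma~\ref{embedded} for the triangle $\{a,d,g\}$, but the hypothesis explicitly permits $a=d$ (and likewise $b=e$). When $a=d$ one has $P_1=P_2$, the arc $g$ is a loop at $P_1$, and $\{a,g\}$ bounds a \emph{non-embedded} triangle with inner arc $a$ and outer arc $g$; there is no complex $\mathcal{C}_3$ from Lemma~\ref{embedded} in this case, and no amount of case analysis on $S$ will ``verify that $\{a,d,g\}$ still bounds an embedded triangle,'' because it does not. The paper handles exactly this dichotomy by taking the subcomplex $\mathcal{K}$ from Lemma~\ref{embedded} when $a\neq d$ and from Lemma~\ref{non-embedded} when $a=d$. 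With that substitution your argument goes through and coincides with the paper's.
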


\begin{proof}
Let $f$ be the arc shown in Figure \ref{one side}. Then $f$ is an arc on $S$ which intersects $c$ exactly once and is disjoint from $a$, $b$, $d$, and $e$. Then $a$, $d$, and $f$ are the sides of a triangle on $S$. This triangle is non-embedded if $a=d$, and embedded otherwise. Let $\mathcal{K}$ be the finite simplicial subcomplex of $\arc$ from either Lemma \ref{embedded} or Lemma \ref{non-embedded} corresponding to this triangle. Further, let $\mathcal{B}$ be the finite simplicial complex of $\arc$ from Lemma \ref{detectible} corresponding to the pair $(c,f)$ and 
let $\mathcal{C}_1$ and $\mathcal{C}_2$ be the finite simplicial complexes of $\arc$ from Lemma \ref{embedded} corresponding to $\Delta_1$ and $\Delta_2$, respectively. We define: 
\[\mathcal{E} = \text{Span}_{\arc}(\{a,b,c,d,e,f\}\cup \mathcal{B} \cup \mathcal{C}_1 \cup \mathcal{C}_2 \cup \mathcal{K}).\]

Let $\mathcal{Y}$ be any simplicial subcomplex of $\arc$ containing $\mathcal{E}$ and $\lambda:\mathcal{Y} \rightarrow \arcp$ a locally injective simplicial map. By Lemma \ref{embedded} there is an embedded triangle $\Delta_1'$ on $S'$ with sides $a'$, $b'$ and $c'$ and an embedded triangle $\Delta_2'$ on $S'$ with sides $c'$, $d'$, and $e'$. Further, by Lemma \ref{detectible}, we know that $i(c', f')=1$. Since $\lambda$ is simplicial, $a'$, $b'$, $d'$, and $e'$ are disjoint from $f'$. Depending on the identifications of sides, either Lemma \ref{embedded} or Lemma \ref{non-embedded} implies that $a'$, $d'$ and $f'$ border a triangle. By inspection, we see that these conditions hold simultaneously only if the orientation of $\Delta_1'$ relative to $\Delta_2'$ is the same as the orientation of $\Delta_1$ relative to $\Delta_2$.  Figure \ref{one side bad}, for example, shows the case where no sides are identified and the relative orientations are reversed. As another example, Figure \ref{two sides a bad}, shows the case where $b=e$ and the relative orientations are reversed. In both cases, there is no possible placement of $f'$ which satisfies all the conditions above. All possible identifications of the sides of $\Delta_1$ and $\Delta_2$ yield this result, hence the natural homeomorphisms $F_1: (\accentset{\circ}{\Delta}_1, a, b, c) \rightarrow (\accentset{\circ}{\Delta}'_1, a', b', c')$ and $F_2:(\accentset{\circ}{\Delta_2}, c, d, e) \rightarrow (\accentset{\circ}{\Delta}_2', c', d', e')$ can be made to agree along $c$. 
\end{proof}

\begin{figure}[h]
    \labellist
        \pinlabel{$a$} at 230 790
        \pinlabel{$b$} at 630 790
        \pinlabel{$d$} at 230 350
        \pinlabel{$e$} at 630 360
        \pinlabel{$c$} at 430 620
        \pinlabel{$\Delta_1 \cup \Delta_2$} at 430 100
        \pinlabel{$a'$} at 1350 790
        \pinlabel{$b'$} at 1750 790
        \pinlabel{$e'$} at 1350 350
        \pinlabel{$d'$} at 1750 360
        \pinlabel{$c'$} at 1550 630
        \pinlabel{$\Delta_1' \cup \Delta_2'$} at 1550 100
    \endlabellist
    \centering
    \includegraphics[scale=.11]{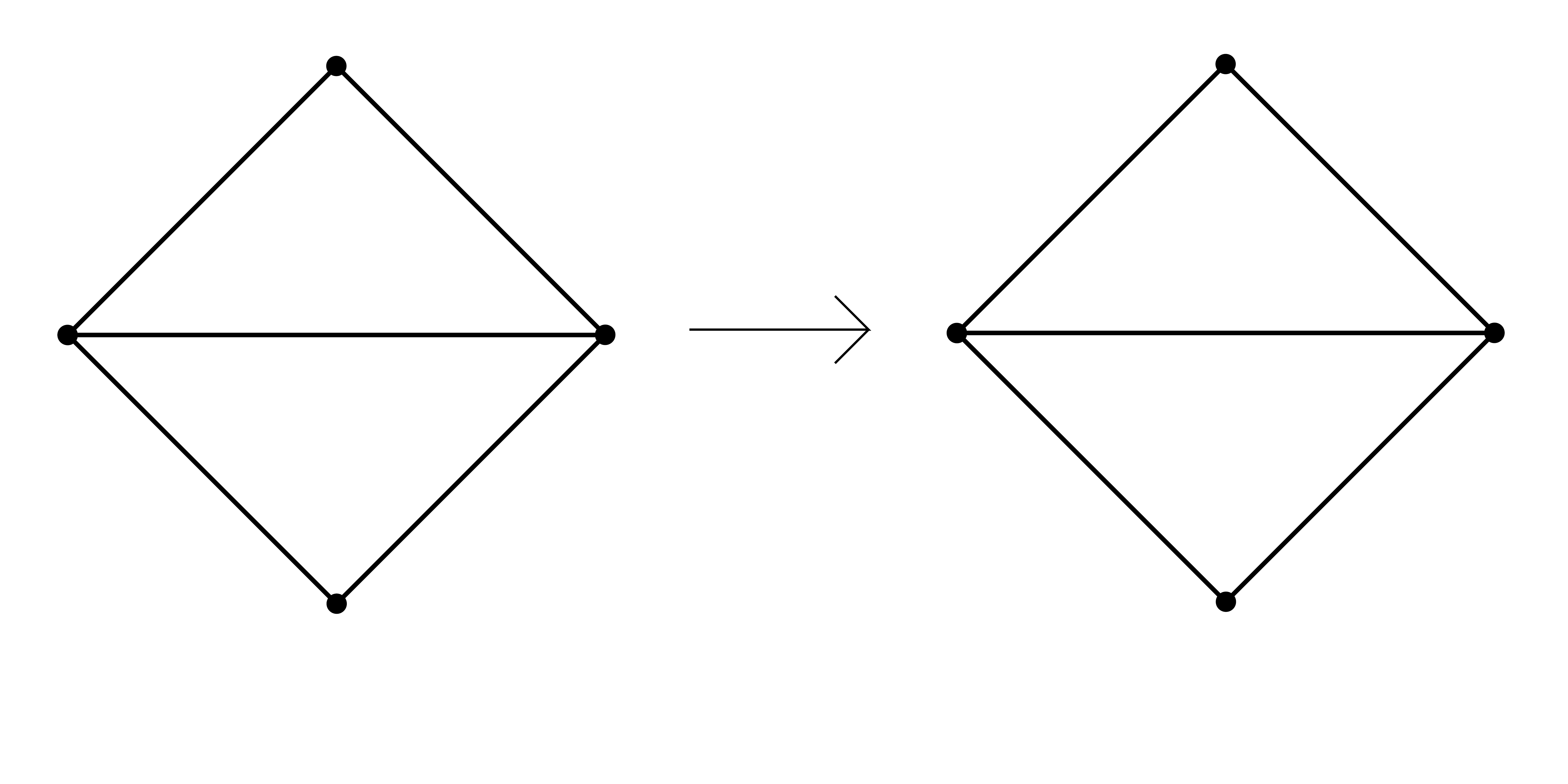}
    \caption{Triangle configurations reversing relative orientation}
    \label{one side bad}
\end{figure}

\begin{figure}[h]
    \labellist
        %first pair of triangles
        \pinlabel{$a$} at 170 700
        \pinlabel{$d$} at 170 280
        \pinlabel{$c$} at 250 530
        \pinlabel{$b$} at 520 540
        \pinlabel{$\Delta_1 \cup \Delta_2$} at 400 100
        %second pair of triangles
        \pinlabel{$a$} at 970 700
        \pinlabel{$d$} at 970 260
        \pinlabel{$c$} at 1080 585
        \pinlabel{$b$} at 1350 595
        \pinlabel{$c$} at 1080 390
        \pinlabel{$b$} at 1350 375
        \pinlabel{$\Delta_2$} at 1250 70
        \pinlabel{$\Delta_1$} at 1250 900
        %third pair of triangles
        \pinlabel{$a'$} at 1810 700
        \pinlabel{$d'$} at 1810 260
        \pinlabel{$c'$} at 1925 610
        \pinlabel{$b'$} at 2195 610
        \pinlabel{$b'$} at 1925 380
        \pinlabel{$c'$} at 2195 380
        \pinlabel{$\Delta_2'$} at 2095 70
        \pinlabel{$\Delta_1'$} at 2095 900
        %fourth pair of triangles
        \pinlabel{$a'$} at 2620 700
        \pinlabel{$d'$} at 2830 520
        \pinlabel{$c'$} at 2770 670
        \pinlabel{$b'$} at 2970 680
        \pinlabel{$\Delta_1' \cup \Delta_2'$} at 2870 110
    \endlabellist
    \centering
    \includegraphics[scale=.11]{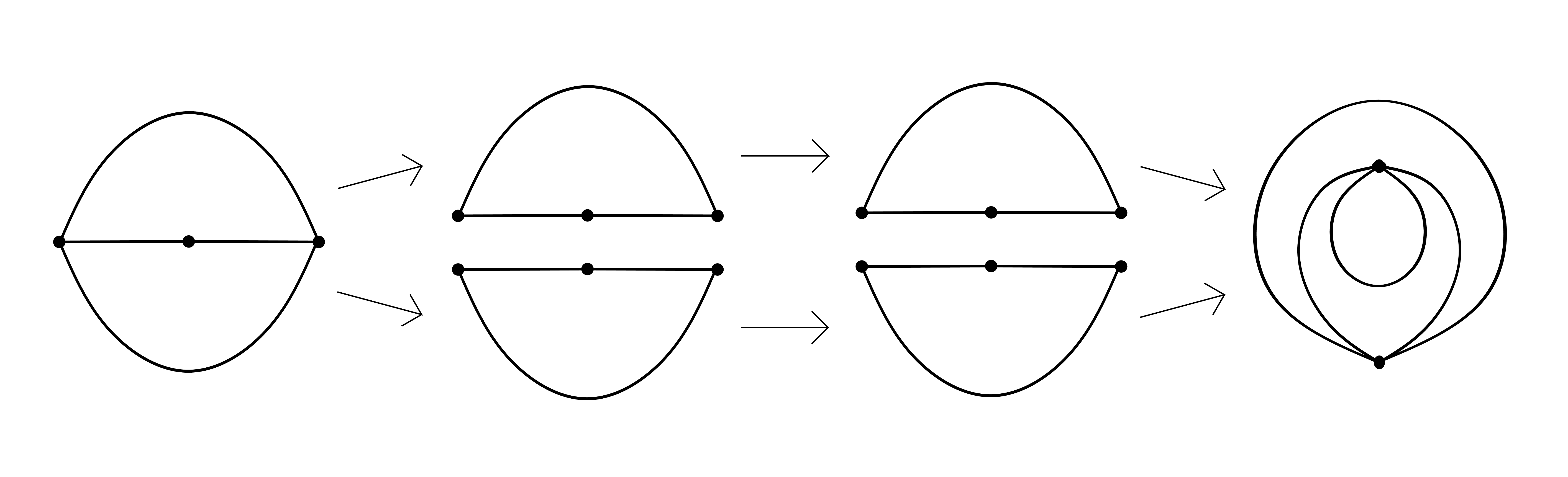}
    \caption{Triangle configurations reversing relative orientation}
    \label{two sides a bad}
\end{figure}

Now we apply the above results to find a candidate homeomorphism (cf \cite[Prop. 3.8]{irmak2010injective}).

\begin{prop}\label{extension}
Let $T$ be a triangulation of $S$. Then there exists a finite simplicial subcomplex $\mathcal{F}$ of $\arc$ containing $T$ with the following property: If $\mathcal{Y}$ is any simplicial subcomplex of $\arc$ containing $\mathcal{F}$ and $\lambda:\mathcal{Y} \rightarrow \arcp$ a locally injective simplicial map, there exists a homeomorphism $H:S\rightarrow S'$ whose induced map on $\arc$ agrees with $\lambda$ on $T$. 
\end{prop}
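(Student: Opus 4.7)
The plan is to build a candidate homeomorphism $H:S\to S'$ piecewise on the triangles of $T$, using the preceding lemmas to guarantee that the pieces fit together. Since $T$ is a maximal simplex of $\arc$, any locally injective $\lambda$ defined on a subcomplex containing $T$ must send $T$ to a simplex of dimension $\text{dim}(\arc)$ in $\arcp$; hence $\text{dim}(\arc)=\text{dim}(\arcp)$ and $T':=\lambda(T)$ is a triangulation of $S'$ with the same number of triangles as $T$.

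The subcomplex $\mathcal{F}$ will be the span of $T$ together with two finite families of auxiliary subcomplexes: (i) for each triangle $\Delta$ of $T$, the complex $\mathcal{C}$ from Lemma~\ref{embedded} if $\Delta$ is embedded, or $\mathcal{D}$ from Lemma~\ref{non-embedded} if $\Delta$ is non-embedded --- this forces $\lambda(\Delta)$ to be a triangle of $T'$ of the same type (with inner and outer arcs preserved in the non-embedded case); and (ii) for each pair of distinct triangles $\Delta_1,\Delta_2$ of $T$ sharing a side, the complex $\mathcal{E}$ from Lemma~\ref{one side prop}, together with the directly analogous construction when one or both of $\Delta_1,\Delta_2$ is non-embedded (proved by the same argument, substituting Lemma~\ref{non-embedded} for Lemma~\ref{embedded} where appropriate). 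Since $T$ has only finitely many triangles and only finitely many adjacent pairs, $\mathcal{F}$ is finite.

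Given any locally injective $\lambda:\mathcal{Y}\to\arcp$ with $\mathcal{Y}\supseteq\mathcal{F}$, family (i) produces for each closed triangle $\Delta$ of $T$ a natural homeomorphism $F_\Delta:\Delta\to\lambda(\Delta)$ onto the corresponding triangle of $T'$, well-defined up to composition with a combinatorial symmetry of $\Delta$. By (ii), for each pair of triangles of $T$ sharing a side $c$, we may choose these symmetries so that $F_{\Delta_1}$ and $F_{\Delta_2}$ agree along $c$. Since the dual graph of $T$ is connected, we propagate such choices across all triangles, with the pairwise agreement from (ii) guaranteeing global consistency. The resulting glued map $H:S\to S'$ is continuous and restricts to a homeomorphism on each closed triangle, hence is a continuous bijection between compact Hausdorff spaces, and therefore a homeomorphism. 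By construction, $H$ sends each arc of $T$ to its $\lambda$-image, so the induced map of $H$ on $\arc$ agrees with $\lambda$ on $T$.

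The main obstacle is the bookkeeping in family (ii): formulating and proving the variants of Lemma~\ref{one side prop} when $\Delta_1$ or $\Delta_2$ is non-embedded, and verifying that the local gluing choices cohere globally as one traverses cycles in the dual graph of $T$. The latter is where orientability of $S$ is essential --- matched relative orientations across each dual edge must cohere around every cycle, an obstruction which vanishes precisely because $S$ is orientable.
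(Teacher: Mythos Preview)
Your outline is essentially the paper's approach and is correct, but you over-engineer family (ii) and the global gluing step.

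First, the paper does not need variants of Lemma~\ref{one side prop} for non-embedded triangles. Since $S\neq S_{0,3}$, two non-embedded triangles of $T$ can never share a side (their union along the common outer arc would be all of $S_{0,3}$). When an embedded $\Delta_j$ shares a side with a non-embedded $\Delta_i$, the shared side must be the outer arc of $\Delta_i$; the non-embedded triangle has a reflection symmetry, so there are two candidate homeomorphisms $F_i,F_i^*:\accentset{\circ}{\Delta}_i\to\accentset{\circ}{\Delta}_i'$ of opposite orientation types, and one of them matches $F_j$ along the shared side. As the outer arc is the \emph{only} side $\Delta_i$ shares with another triangle, this choice is unconstrained and no auxiliary lemma is needed. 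Only the embedded--embedded case actually uses Lemma~\ref{one side prop}.

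Second, your worry about propagating choices around cycles in the dual graph is misplaced. For an embedded triangle with distinct sides $a_i,b_i,c_i$, once the side correspondence $a_i\mapsto\lambda(a_i)$, $b_i\mapsto\lambda(b_i)$, $c_i\mapsto\lambda(c_i)$ is fixed by $\lambda$, the homeomorphism $F_i:\accentset{\circ}{\Delta}_i\to\accentset{\circ}{\Delta}_i'$ is already determined up to isotopy: its orientation type is forced by the cyclic order of the sides in $\Delta_i'$. There is no residual symmetry to choose, hence nothing to propagate. Lemma~\ref{one side prop} is then a \emph{verification} that these already-determined maps agree on each shared edge, not a device for making compatible choices. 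Consequently there is no cycle obstruction to resolve, and orientability does not enter in the way you describe.

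In short: your proof works, but it simplifies considerably once you drop the proposed variants of Lemma~\ref{one side prop} and recognize that the $F_i$ for embedded triangles are determined by $\lambda$, so the pieces glue as soon as pairwise compatibility is checked.
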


\begin{proof} 
There are $N=4g+2n-4$ triangles in $T$. Denote them by $\{\Delta_i:1\leq i \leq N\}$. Since $S$ is neither $S_{0,3}$ nor $S_{1,1}$, no two components $\Delta_i$, $\Delta_j$ share the same three sides. Suppose that $M$ is the number of embedded triangles in $T$. Then after reordering, we may assume that $\{\Delta_i:1\leq i \leq M\}$ are embedded triangles and $\{\Delta_j:M+1\leq j \leq N\}$ are non-embedded triangles. Then for each $1\leq i \leq M$, let $\mathcal{C}_i$ be the finite simplicial subcomplex of $\arc$ from Lemma \ref{embedded} corresponding to $\Delta_i$, and for each $M+1 \leq i \leq N$, let $\mathcal{D}_i$ be the finite simplicial subcomplex of $\arc$ from Lemma \ref{non-embedded} corresponding to $\Delta_i$. 

Suppose there are $K$ arcs in $T$ which border two distinct triangles of $T$ (the inner arc of a non-embedded triangle does not have this property, so it may be that $K<N$.) Let $\mathcal{E}_i$, $1\leq i \leq K$ be the finite simplicial complexes of $\arc$ from Lemma \ref{one side prop} corresponding to each pair. We define:
\[\mathcal{F} = \text{Span}_{\arc}\left(T \cup \bigcup_{1\leq i \leq M} \mathcal{C}_i \cup \bigcup_{M+1 \leq i \leq N} \mathcal{D}_i \cup \bigcup_{1\leq i \leq K} \mathcal{E}_i \right).\]

Let $\mathcal{Y}$ be any simplicial subcomplex of $\arc$ containing $\mathcal{F}$ and let $\lambda:\mathcal{Y} \rightarrow \arcp$ be a locally injective simplicial map. Let $T'=\lambda(T)$; note that it is a triangulation of $S'$. Since $\text{dim}(\arc)=\text{dim}(\arcp)$, there are also $N$ triangles in $T'$. Denote them by $\{\Delta'_i:1\leq i \leq N\}$. Suppose $\Delta_i$ is a triangle in $S$ with sides $a$, $b$, and $c$. Then Lemmas \ref{embedded} and \ref{non-embedded} ensure that $\lambda(a)$, $\lambda(b)$, and $\lambda(c)$ border a triangle in $T'$ of the same type (embedded or non-embedded). Then after reordering, we may assume that $\Delta'_i$ corresponds to $\Delta_i$ in this way.

Suppose $\Delta_i$ is embedded with sides $a_i$, $b_i$, and $c_i$. Let $a'_i=\lambda(a_i)$, $b'_i=\lambda(b_i)$, and $c'_i=\lambda(c_i)$. Then there exists a homeomorphism $F_i:(\accentset{\circ}{\Delta}_i, a_i, b_i, c_i)\rightarrow (\accentset{\circ}{\Delta}'_i, a'_i, b'_i, c'_i)$. This homeomorphism is well-defined up to relative isotopies and its orientation type is fixed. Suppose $\Delta_i$ is non-embedded with inner arc $a_i$ and outer arc $b_i$. Then $a'_i=\lambda(a_i)$ is the inner arc of a triangle $\Delta'_i$ with outer arc $b'_i=\lambda(b_i)$. Further, there exists two homeomorphisms $F_i,F_{i}^*:(\accentset{\circ}{\Delta}_i, a_i, b_i)\rightarrow (\accentset{\circ}{\Delta}'_i, a'_i, b'_i)$ with opposite orientation types. 

Now suppose $\Delta_i$ and $\Delta_j$ are two distinct triangles in a triangulation $T$ of $S$ which share the side $s$. Since $S$ is not $S_{0,3}$, it cannot be the case that $\Delta_i$ and $\Delta_j$ are both non-embedded triangles. Suppose $\Delta_i$ is non-embedded and $\Delta_j$ is embedded. The shared side $s$ must be the outer arc of $\Delta_i$. Then one of $F_i$ or $F_i^*$ can be made to agree with $F_j$ along $s$ and we may assume it is the one called $F_i$. If instead $\Delta_i$ and $\Delta_j$ are both embedded, then Lemma \ref{one side prop} guarantees that $F_i$ and $F_j$ can be made to agree along $s$. 

Then there is a homeomorphism of the punctured surfaces $F:S\backslash \mathscr{P}_S\rightarrow S' \backslash \mathscr{P}_{S'}$ whose restriction to $\accentset{\circ}{\Delta}_i$ is equal to $F_i$ for $1\leq i \leq N$. This can be extended uniquely to a homeomorphism $H:S\rightarrow S'$, and by construction the induced map by $H$ on $\arc$ agrees with $\lambda$ on $T$.
\end{proof}

We can now prove the general case of Theorem \ref{main theorem one general} (cf \cite[Prop. 3.11]{irmak2010injective}).

\begin{proof}[Proof of Theorem \ref{main theorem one general}.]
We dispensed with the cases where $\text{dim}(\arc)\leq 2$ in Section \ref{section: exceptional cases}. Suppose $\text{dim}(\arc)>2$.

Let $T$ be a triangulation of $S$ and let $\mathcal{F}$ be as in Lemma \ref{extension}. Suppose $y_1, \ldots, y_k$ are the vertices of $\mathcal{F}$. Then for each $i$, we fix a triangulation $T^i$ of $S$ containing $y_i$. By Proposition \ref{prop: connectedness}, there exists a finite sequence of triangulations $T=T^i_0, T^i_1, \ldots, T^i_{m_i-1}, T^i_{m_i} = T^i$ such that for each $0\leq j \leq m_i-1$, $T^i_{j}$ and $T^i_{j+1}$ differ by a flip. We define:
\[\mathcal{X} = \text{Span}_{\arc}\left(\mathcal{F} \cup \bigcup_{1\leq i \leq k} \bigcup_{0\leq j \leq m_i} \{T^i_j\}\right).\]
Then $\mathcal{X}$ is a finite simplicial subcomplex of $\arc$ and has the property that any vertex $y\in \mathcal{X}$ is contained in a triangulation $T^*$ of vertices in $\mathcal{X}$ such that there exists a finite sequence of simplices $T=T_0, T_1, \ldots, T_{m-1}, T_m = T^*$, all contained in $\mathcal{X}$, where for each $0\leq j \leq m-1$, $T_{j}$ and $T_{j+1}$ differ by a flip.

Let $\lambda:\mathcal{X} \rightarrow \arcp$ be any locally injective simplicial map and $H:S\rightarrow S'$ be as in Proposition \ref{extension}. Let $H_*:\arc \rightarrow \arcp$ be the induced map of $H$. Proposition \ref{extension} says that $H_*(x)=\lambda(x)$ for $x\in T$. We now show that this is true for any vertex $y\in \mathcal{X}$ so that $H_*|_\mathcal{X}=\lambda$. 

Suppose $\phi = (H_*)^{-1} \circ \lambda: \mathcal{X} \rightarrow \arc$. We already know that $\phi$ is the identity on $T$. Suppose $y$ is a vertex in $\mathcal{X}$. Let $T=T_0, T_1, \ldots, T_{m-1}, T_m = T^*$ be the finite sequence of triangulations, all contained in $\mathcal{X}$, such that $y\in T^*$ and for each $0\leq j \leq m-1$, $T_{j}$ and $T_{j+1}$ differ by a flip. We will show that if $\phi$ is the identity on $T_i$, this implies that it is the identity on $T_{i+1}$. Now $T_i \cap T_{i+1}$ has codimension one in $\arc$. Say $a$ is the single arc in $T_i\backslash T_{i+1}$ and $b$ the single arc in $T_{i+1}\backslash T_i$. In general, a collection of arcs corresponding to a codimension one simplex in $\arc$ has either one or two arcs disjoint from all arcs in the collection, ie one or two ways to complete the collection into a triangulation of $S$. For $T_i\cap T_{i+1}$, there are two arcs --- $a$ and $b$. We know that $\phi (a)=a$ and $\phi(T_i\cap T_{i+1})=T_i\cap T_{i+1}$ by our hypothesis. Then since $\phi$ is a locally injective simplicial map, it must be the case that $\phi(b)=b$, hence $\phi$ is the identity on $T_{i+1}$. Then by induction, it follows that $\phi(y)=y$ and hence that $\phi$ is the identity on all of $\mathcal{X}$. This implies that $H_*|_\mathcal{X}=\lambda$. If $H'$ is another homeomorphism whose induced map agrees with $\lambda$ on $\mathcal{X}$, then $H$ and $H'$ agree on the triangulation $T$, thus they are homotopic. 
\end{proof}

We can now prove the following corollaries. 

\begin{proof}[Proof of Corollary \ref{generalization of im}.] 
Fix $S$, $S'$ and $\phi$. Let $\mathcal{X}$ be the finite rigid set from Theorem \ref{main theorem one general}. Then $\phi|_{\mathcal{X}}:\mathcal{X}\rightarrow \arcp$ meets the conditions described in Theorem 1.1, so there is a homeomorphism $H:S\rightarrow S'$ which induces $\phi|_{\mathcal{X}}$. Then it remains to show that $\phi$ and $H$ agree on all of $\arc$, which can be done by employing a near identical argument to the one used in the proof of Theorem \ref{main theorem one general}.

\end{proof}

\begin{proof}[Proof of Corollary \ref{corollary to main theorem one}.]
Suppose $\varphi:\arc \rightarrow \arcp$ is an isomorphism. If $S\neq S_{0,3}$, we apply Theorem \ref{main theorem one general} to the injective simplicial map $\varphi|_\mathcal{X}:\mathcal{X}\rightarrow \arcp$, where $\mathcal{X}$ is the finite rigid set given in the theorem. Suppose $S=S_{0,3}$. Then $\text{dim}(\arc)=2$, so $S'$ is either $S_{0,3}$ or $S_{1,1}$. However $\mathcal{A}(S_{0,3})$ is finite and $\mathcal{A}(S_{1,1})$ is infinite, so it must be that $S'$ is $S_{0,3}$, hence $S$ and $S'$ are homeomorphic. 
\end{proof}

Finally, we extend the proof of Theorem \ref{main theorem one general} given above to construct an exhaustion of $\arc$ by finite rigid sets.

\begin{proof}[Proof of Theorem \ref{main theorem two}.]
We dispensed with the cases where $\text{dim}(\arc)\leq 2$ in Section \ref{section: exceptional cases}, so suppose $\text{dim}(\arc)> 2$. Let $\mathcal{X}_0$ be the finite rigid set of $\arc$ from Theorem \ref{main theorem one general}, which by construction contains a triangulation $T$ of $S$. It is well-known that $\arc$ has countable vertex set, so we can enumerate the vertices $\mathcal{A}^{(0)}(S)=\{x_1, x_2, \ldots\}$. As above, for each $i\in \N$, there is a triangulation $T^{i}$ of $S$ containing $x_i$ and a finite sequence of simplices $T=T^{i}_0, T^{i}_1, \ldots, T^{i}_{m_i-1}, T^{i}_{m_i} = T^{i}$ where for each $0\leq j \leq m_i-1$, $T^{i}_{j}$ and $T^{i}_{j+1}$ differ by a flip. Then for $i\geq 1$, we define:
\[\mathcal{X}_i = \text{Span}_{\arc}\left(\mathcal{X}_{i-1} \cup \bigcup_{0\leq j \leq m_i} T^{i}_j \right).\]

Observe that $\mathcal{X}_i$ is finite. An identical argument to the one employed in the proof of Theorem \ref{main theorem one general} above shows that $\mathcal{X}_i$ is rigid. It is clear that $\mathcal{X}_0\subseteq \mathcal{X}_1 \subseteq ... \subseteq \arc$ and $\bigcup_{i\in \N} \mathcal{X}_i = \arc$. 

\end{proof}

\printbibliography

\end{document}